\numberwithin{equation}{section}
\newtheorem{thm}{Theorem}
\newtheorem{lem}{Lemma}
 \numberwithin{equation}{section}
\begin{document}
\title{Asymptotics of the Discrete Chebyshev Polynomials}
\author{J. H. Pan$^\text{a}$ and R. Wong$^\text{b}$}
\date{}
%\date{\small \it City University of Hong Kong\\
%Hong Kong}
\maketitle  \noindent \emph{$^\text{a}$ Department of Mathematics,
City University of Hong
Kong, Tat Chee Avenue, Kowloon, Hong Kong\\
$^\text{b}$ Liu Bie Ju Centre for Mathematical Sciences, City
University of Hong Kong, Tat Chee Avenue, Kowloon, Hong Kong\\}

\begin{abstract}
The discrete Chebyshev polynomials $t_n(x,N)$ are orthogonal with
respect to a distribution, which is a step function with jumps one
unit at the points $x=0,1,\cdots, N-1$, $N$ being a fixed positive
integer. By using a double integral representation, we have recently
obtained asymptotic expansions for $t_{n}(aN,N+1)$ in the double
scaling limit, namely, $N\rightarrow\infty$ and $n/N\rightarrow b$,
where $b\in (0,1)$ and $a\in(-\infty,\infty)$; see [Studies in Appl.
Math. \textbf{128} (2012), 337-384]. In the present paper, we
continue to investigate the behaviour of these polynomials when the
parameter $b$ approaches the endpoints of the interval $(0,1)$.
While the case $b\rightarrow 1$ is relatively simple (since it is
very much like the case when $b$ is fixed), the case $b\rightarrow
0$ is quite complicated. The discussion of the latter case is
divided into several subcases, depending on the quantities $n$, $x$
and $xN/n^2$, and different special functions have been used as
approximants, including Airy, Bessel and Kummer functions.
\end{abstract}

\newpage

\section{INTRODUCTION}
The discrete Chebyshev polynomials $t_n(x,N)$ can be defined as a
special case of the Hahn polynomials \cite[p.174]{Richard Beal}
\begin{equation}\label{definition of Q}
\begin{split}
Q_n(x;\alpha,\beta,N) &= {_3F_2}(-n,-x,n+\alpha+\beta+1;-N,\alpha+1;1) \\
   &= \sum_{k=0}^{n}\frac{(-n)_k(-x)_k(n+\alpha+\beta+1)_k}{(-N)_k(\alpha+1)_k
   k!}.
\end{split}
\end{equation}
With $\alpha=\beta=0$ in (\ref{definition of Q}), we have
\begin{equation}\label{relation between DCP and Hahn}
\begin{split}
t_n(x,N)&=(-1)^n(N-n)_n Q_n(x;0,0,N-1)\\
%&=&(-1)^n\sum_{k=0}^n\binom{n+k}{k}\frac{(N-n)_k(-x)_k(n-k+1)_k}{k!}\\
&=(-1)^n(N-n)_n\sum_{k=0}^n\frac{(-n)_k(-x)_k(n+1)_k}{(-N+1)_k k!
   k!};
\end{split}
\end{equation}
see \cite[p.176]{Richard Beal}.\\

In a recent paper \cite{pan}, we have studied the asymptotic
behaviour of $t_n(x,N+1)$ as $n$, $N\rightarrow\infty$ in such a way
that the ratios
\begin{equation}\label{a and b}
    a=x/N\qquad\text{and}\qquad b=n/N,
\end{equation}
satisfy the inequalities
\begin{equation}\label{}
    -\infty<a\leq \frac{1}{2}\qquad\text{and}\qquad 0<b<1.
\end{equation}
In view of the symmetry relation \cite[eq.(8)]{pan}
\begin{equation}\label{symmetricity}
    t_n(x,N+1)=(-1)^n t_n(N-x,N+1),
\end{equation}
our study actually covers the entire real-axis $-\infty<x<\infty$
or, equivalently, the entire parameter range $-\infty<a<\infty$. For
a discussion of the asymptotic behaviour of the Hahn polynomials,
see
\cite{lin}.\\

In the present paper, we shall investigate the behaviour of the
polynomials $t_n(x, N+1)$, when the parameter $b$ given in (\ref{a
and b}) either tends to $0$ or tends to $1$. The case $b\rightarrow
1$ turns out to be rather simple, since the ultimate expansions and
their derivations are similar to those in the case when $b$ is
fixed; see (\ref{result a>0}) for $0\leq a\leq\frac{1}{2}$ and
(\ref{result
a<0}) for $a<0$.\\

To derive asymptotic approximation for $t_n(x, N+1)$ when
$b\rightarrow 0$, we divide our discussion into several cases,
depending on the quantities $n$, $x$ and $xN/n^2$. A summary of our
findings is given in Table \ref{summary table} below.
\begin{table}[!htb]\label{summary table}
  \centering
  \caption{Asymptotic expansions in different cases when $\frac{n}{N}\rightarrow 0$ and $N\rightarrow \infty$}\label{table}
  \vspace{0.5cm}
  \noindent
  \begin{tabular}{|l||*{3}{c|}} \hline
  \backslashbox{$x$}{$xN/n^2$}
  % after \\: \hline or \cline{col1-col2} \cline{col3-col4} ...
   & small & fixed & large \\\hline\hline
  large & Kummer & Airy & Bessel \\ \hline
  fixed & Kummer & Airy & series(\ref{relation between DCP and Hahn}) \\\hline
  small & Kummer / series(\ref{relation between DCP and Hahn}) & Airy & series(\ref{relation between DCP and Hahn}) \\
  \hline
\end{tabular}
\end{table}
Some explanation is needed for this table. By ``small", ``fixed" and
``large", we mean, respectively, $x=o(1)$, $x\in[\delta, M]$ and
$x\gg O(1)$ as $N\rightarrow \infty$, where $\delta$ is a small
positive number and $M$ is a large positive number. By ``Airy", we
mean an asymptotic expansion whose associated approximants are the
Airy functions $\text{Ai}(z)$ and $\text{Bi}(z)$. In the same
manner, by ``Bessel" and ``Kummer", we mean asymptotic expansions
whose associated approximants are, respectively, the Bessel function
$J_v(z)$ and the Kummer function $M(a,c,z)$. The Kummer function
used in this paper is defined by
\begin{equation}\label{Kummer function}
    \mathbf{M}(a,c,z)=\frac{1}{\Gamma(c)}M(a,c,z)=\frac{1}{\Gamma(c)}\sum_{s=0}^{\infty}\frac{(a)_s}{(c)_s}\frac{z^s}{s!},
\end{equation}
which was introduced in \cite[p.255]{Olver's book}. The case
involving Bessel functions covers an earlier result of Sharapodinov
\cite{Sharapodinov}, where, instead of Bessel functions, Jacobi
polynomials are used as an approximant. In the case when $xN/n^2$ is
large and $x$ is either fixed or small, the series in (\ref{relation
between DCP and Hahn}) is itself an asymptotic expansion (in the
generalized sense \cite[p.10]{R.Wong's book}) as
$N\rightarrow\infty$. Hence, there is
no need to seek for another asymptotic representation.\\

The arrangement of the present paper is as follows. In Section 2, we
recall the major results in \cite{pan}; to facilitate our
presentation for later sections, we also include here brief
derivations of the expansions given in \cite{pan}. In Section 3, we
consider the case when $xN/n^2$ is small and show that the
expansions are of Kummer-type. In Section 4, we consider the case
when the quantity $xN/n^2$ is fixed; that is, when $xN/n^2$ is
bounded away from zero and infinity. There are three subcases,
depending on $x$ being large, fixed or small. In all three subcases,
we will show that the expansions are of Airy-type; see Table
\ref{summary table}. The case when both quantities $x$ and $xN/n^2$
are large is dealt with in Section 5, where we show that the
expansion of $t_n(x,N+1)$ can be expressed in terms of Bessel
functions. The final section is devoted to two remaining cases,
namely, (i) when $xN/n^2$ is large and $x$ is either fixed or small, and (ii) $x<0$.\\

\section{RESULTS IN REFERENCE \cite{pan}}
In \cite{pan}, we have divided our discussion into two cases: (i)
$0\leq a\leq \frac{1}{2}$, and (ii) $a<0$. As mentioned in Section
1, by virtue of the symmetry relation (\ref{symmetricity}), these
two cases cover the entire range $-\infty<a<\infty$.\\

In case (i), we started with the integral representation
\begin{equation}\label{IR of DCP for x>0}
t_n(x,N+1)=\frac{(-1)^n}{2\pi
i}\frac{\Gamma(n+N+2)}{\Gamma(n+1)\Gamma(N-n+1)}\int_0^1\int_{\gamma_1}\frac{1}{w-1}
e^{N f(t,w)}\mathrm{d}w\mathrm{d}t,
\end{equation}
where
\begin{equation}\label{phase function f for x>0}
f(t,w)=b \ln(1-t)+(1-b)\ln t+a\ln w-a \ln(w-1)+b \ln[1-(1-t)w],
\end{equation}
and the curve $\gamma_1$ starts at $w=0$, runs along the lower edge
of the positive real line towards $w=1$, encircles the point $w=1$
in the counterclockwise direction and returns to the origin along
the upper edge of the positive real line. As a function of two
variables, the partial derivatives $\partial f/\partial t$ and
$\partial f/\partial w$ vanish at $(t,w)=(t_\pm, w_\pm)$, where
\begin{equation}\label{sets of saddle points 1}
  t_{+}=\frac{2-2a-b^2+ b\sqrt{b^2-4a+4a^2}}{2(1-a)(1+b)},\qquad
  w_+=\frac{b+\sqrt{b^2-4a+4a^2}}{2b},
\end{equation}
and
\begin{equation}\label{sets of saddle points 2}
 t_{-}=\frac{2-2a-b^2-
 b\sqrt{b^2-4a+4a^2}}{2(1-a)(1+b)},\qquad  w_-=\frac{b-\sqrt{b^2-4a+4a^2}}{2b} .
\end{equation}
For each fixed $w\in \gamma_1$, we now find a steepest descent path
of the phase function $f(t,w)$ in the variable $t$, which passes
through a saddle point $t_0(w)$ depending on $w$. (Note that not
only the saddle point $t_0(w)$, but all points $t$ on this steepest
descent path depend on $w$.) The function $f(t_0(w),w)$ is a
function of $w$ alone. To find the relevant saddle point $t_0(w)$,
we solve the equation $\partial f(t,w)/\partial t=0$, and obtain
\begin{equation}\label{saddle points for t}
    t_0(w)=\frac{2w-1+\sqrt{1+4b^2(w-1)w}}{2(1+b)w}.
\end{equation}
It can be shown that
\begin{equation}\label{temptemp123}
    t_0(w_+)=t_+,\qquad t_0(w_-)=t_-.
\end{equation}

%$f(t,w)$ has two sets of saddle points
%
%\begin{equation}\label{sets of saddle points 1}
%  t_{+}=\frac{2-2a-b^2+ b\sqrt{b^2-4a+4a^2}}{2(1-a)(1+b)},\qquad  w_+=\frac{b+\sqrt{b^2-4a+4a^2}}{2b}
%\end{equation}
%and
%\begin{equation}\label{sets of saddle points 2}
% t_{-}=\frac{2-2a-b^2-
% b\sqrt{b^2-4a+4a^2}}{2(1-a)(1+b)},\qquad  w_-=\frac{b-\sqrt{b^2-4a+4a^2}}{2b} .
%\end{equation}
%Note that $w_+$ and $w_-$, as well as $t_+$ and $t_-$, coalesce when
%$a$ approaches $a_+$ and $a_-$, where
%\begin{equation}\label{critical value of a}
%    a_\pm=\frac{1\pm\sqrt{1-b^2}}{2}.
%\end{equation}
Define the standard transformation $t\rightarrow\tau$ by
\begin{equation}\label{mapping t to tau}
    \tau^2=f(t_0(w),w)-f(t,w);
\end{equation}
see \cite[p.88]{R.Wong's book}. Note that we have $\tau=-\infty$
when $t=0$, and $\tau=+\infty$ when $t=1$. Furthermore, this mapping
takes $t=t_0(w)$ to $\tau=0$. Coupling $(\ref{IR of DCP for x>0})$
and $(\ref{mapping t to tau})$ gives
%\begin{equation}\label{dt over dtau}
%        \begin{split}
%            \frac{dt}{d\tau}& =\frac{2\tau}{-\partial f(t,w)/\partial
%            t}=\frac{2\tau t(1-t)[1-(1-t)w]}{(1+b)w(t-t^+_0(w))(t-t^-_0(w))},\quad\quad \tau\neq
%            0,\\
%            \frac{dt}{d\tau}&=\left\{\frac{2t^+_0(w)(1-t^+_0(w))[1-(1-t^+_0(w))w]}{\sqrt{1+4b^2(w-1)w}}\right\}^{1/2}, \hspace{1.6cm} \tau=0.
%        \end{split}
%\end{equation}
\begin{equation}\label{IP after the t mapping}
\begin{split}
t_n(x,N+1)=& \frac{(-1)^n}{2\pi
i}\frac{\Gamma(n+N+2)}{\Gamma(n+1)\Gamma(N-n+1)}\\
&\times \int_{-\infty}^{+\infty}\int_{\gamma_1}\frac{1}{w-1}e^{N
f({t_0(w)},w)}e^{-N\tau^2}\frac{\mathrm{d}t}{\mathrm{d}
\tau}\mathrm{d}w\mathrm{d}\tau,
\end{split}
\end{equation}
where $\gamma_1$ is the integration path of the variable $w$ in
(\ref{IR of DCP for x>0}). Note that the first variable of $f$ in
(\ref{IP after the t mapping}) is now $t_0(w)$, instead of $t$.
Hence, the phase function is a function of $w$ alone. Setting
$\partial f(t_0(w),w)/\partial w=0$, we obtain the saddle points
%%%%%%%%%%%%%%%%%%%%%%%%%%%%%%%%%%%%%%%%
%
%contour deformation for w
%%%%%%%%%%%%%%%%%%%%%%%%%%%%%%%%%%%%%%%%%%
\begin{equation}\label{saddle points of w}
    w_{\pm}=\frac{b\pm\sqrt{b^2-4a+4a^2}}{2b};
\end{equation}
cf.(\ref{sets of saddle points 1}) and (\ref{sets of saddle points
2}). Motivated by (\ref{phase function f for x>0}), define the new
phase function
\begin{equation}\label{psi function}
    \psi(u)=a\ln u-a\ln(u-1)+\eta u.
\end{equation}
The saddle points of $\psi$ are given by
\begin{equation}\label{saddle points of u}
    u_\pm=\frac{\eta\pm\sqrt{\eta^2+4a\eta}}{2\eta}.
\end{equation}
To reduce the double integral in (\ref{IR of DCP for x>0}) into a
canonical form, we define the second mapping $w\rightarrow u$ by
\begin{equation}
\begin{split}
    f(t_0(w),w)&=\psi(u)+\gamma\\
    &=a \ln u-a\ln(u-1)+\eta u+\gamma\label{mapping w to u}
\end{split}
\end{equation}
with
\begin{equation}\label{the relation between w+- and u+-}
    u(w_+)=u_-,\quad\quad\quad\quad u(w_-)=u_+,
\end{equation}
where $\eta$ and $\gamma$ are real numbers depending on the
parameters $a$ and $b$ in (\ref{phase function f for x>0}). From
(\ref{mapping w to u}), we have
\begin{equation}
\begin{split}
\frac{\mathrm{d} w}{\mathrm{d} u} &= \frac{\mathrm{d}
\psi}{\mathrm{d} u}\left/\frac{\mathrm{d}
  f(t_0(w),w)}{\mathrm{d} w}\right.
 \\
   &= \frac{\eta(u-u_+)(u-u_-)w(1-w)\left[(2a-1)-\sqrt{1+4b^2w^2-4b^2w}\right]}{-2b^2u(u-1)(w-w_+)(w-w_-)}\label{dw over du, normal}
\end{split}
\end{equation}
for $u\neq u_\pm$, and by L'H$\hat{o}$pital's rule,
\begin{equation}\label{dw over du, saddle point}
    \frac{\mathrm{d} w}{\mathrm{d}
    u}=\left[\frac{\sqrt{\eta^2+4a\eta}(1-2a)(1-a)(-\eta)}{b^3\sqrt{b^2-4a+4a^2}}\right]^{1/2}
\end{equation}
for $u=u_\pm$. With the change of variable $w\rightarrow u$ defined
in (\ref{mapping w to u}), the representation in (\ref{IP after the
t mapping}) becomes
\begin{equation}\label{IP for x>0 deformed}
\begin{split}
 t_n(x,N+1)=&\frac{(-1)^n}{2\pi
i}\frac{\Gamma(n+N+2)}{\Gamma(n+1)\Gamma(N-n+1)}e^{N\gamma}\\
&\qquad\times
\int_{-\infty}^{+\infty}\int_{\gamma_u}\frac{h(u,\tau)}{u-1}e^{N
\psi(u)-N\tau^2}\mathrm{d}u\mathrm{d}\tau,
\end{split}
\end{equation}
where
\begin{equation}\label{h function}
h(u,\tau)=\frac{u-1}{w-1}\frac{\mathrm{d} w}{\mathrm{d}
u}\frac{\mathrm{d} t}{\mathrm{d} \tau},
\end{equation}
and $\gamma_u$ is the steepest descent path of $\psi(u)$ in the
$u$-plane. An asymptotic expansion, holding uniformly for $a\in
[0,\frac{1}{2}]$, was then derived in \cite{pan} by an
integration-by-parts technique. To state the result, we define
recursively $h_0(u,\tau)=h(u,\tau)$,
\begin{equation}\label{hl recursively 1}
    h_l(u,\tau) = a_l(\tau)+b_l(\tau)u-(u-u_-)(u-u_+)g_l(u,\tau)
\end{equation}
and
\begin{equation}\label{hl recursively 2}
    h_{l+1}(u,\tau)=(u-1)\left[g_l(u,\tau)+u\frac{\partial
g_l(u,\tau)}{\partial u}\right],\qquad l=0,1,2,....
\end{equation}
Furthermore, we write
\begin{equation}\label{expansion of coeffiencet an and bn}
       a_l(\tau)=\sum_{j=0}^{\infty} a_{l,j}\tau^j, \qquad       b_l(\tau)=\sum_{j=0}^{\infty}
       b_{l,j}\tau^j.
\end{equation}
The resulting expansion takes the form
\begin{equation}\label{result a>0}
\begin{split}
  t_n(x,N+1) \sim \frac{(-1)^n\Gamma(n+N+2)e^{N\gamma}}{\Gamma(n+1)\Gamma(N-n+1)\sqrt{N}}&\left\{\mathbf{M}(a N+1,1,\eta N)\sum_{l=0}^{\infty}\frac{c_l}{(\eta N)^{l}}\right.\\
   & \left.\text{ }+\mathbf{M}'(a N+1,1,\eta N)\sum_{l=0}^{\infty}\frac{d_l}{(\eta N)^{l}}\right\},
\end{split}
\end{equation}
where $\mathbf{M}(a,c,z)$ is the Kummer function defined in
(\ref{Kummer function}), and the coefficients $c_l$ and $d_l$ are
given explicitly by
\begin{equation}\label{c_l and d_l when
  a>0}
  c_l = \sum_{m=0}^la_{l-m,2m}\Gamma(m+\frac{1}{2})\eta^m,\qquad
  d_l = \sum_{m=0}^lb_{l-m,2m}\Gamma(m+\frac{1}{2})\eta^m.
\end{equation}

%\begin{rem}
%The expansion obtained above is valid for fixed $b\in(0,1)$.
%Actually, it is also valid for unfixed $b$, in the neighbourhood of
%some constant $b_0\in(0,1)$, as long as bounded away from the end
%points $0$ and $1$. To see this, we notice the correspondence
%between the saddle points (\ref{the relation between w+- and u+-}),
%which implies $\eta$ does not approach $0$ if $b$ is bounded away
%from $0$. Thus, expansion (\ref{result a>0}) is asymptotic.
%\end{rem}

Case (ii) was dealt with in a similar manner. We started with the
double-integral representation
\begin{equation}\label{IR of DCP for x<0}
t_n(x,N+1)=\frac{(-1)^n}{2\pi
i}\frac{\Gamma(n+N+2)}{\Gamma(n+1)\Gamma(N-n+1)}\int_0^1\int_{\gamma_2}\frac{1}{w-1}e^{N
\widetilde{f}(t,w)}\mathrm{d}w\mathrm{d}t,
\end{equation}
where
\begin{equation}\label{phase function f for x<0}
\widetilde{f}(t,w)=b \ln(1-t)+(1-b)\ln t+a\ln(-w)-a \ln(1-w)+b
\ln[1-(1-t)w],
\end{equation}
and the integration path $\gamma_2$ starts at $w=1$, traverses along
the upper edge of the positive real line towards $w=0$, encircles
the origin in the counterclockwise direction and returns to $w=1$
along the lower edge of the positive real line.\\

Now we follow the same argument as given in Case (i), and use the
same mapping $t\rightarrow \tau$ defined in (\ref{mapping t to
tau}), except with $f(t,w)$ replaced by $\widetilde{f}(t,w)$. The
result is
\begin{equation}\label{IP after the t mapping a<0}
\begin{split}
t_n(x,N+1) =& \frac{(-1)^n}{2\pi
i}\frac{\Gamma(n+N+2)}{\Gamma(n+1)\Gamma(N-n+1)}\\
& \times \int_{-\infty}^{+\infty}\int_{\gamma_2}\frac{1}{w-1}e^{N
\widetilde{f}({t_0(w)},w)}e^{-N\tau^2}\frac{\mathrm{d} t}{\mathrm{d}
\tau}\mathrm{d}w\mathrm{d}\tau. \end{split}
\end{equation}
Because of the shape of the contour $\gamma_2$, it turns out that
the phase function $\widetilde{f}(t_0(w),w)$ has only one relevant
saddle point, namely $w_-$; see (\ref{saddle points of w}). As a
consequence, we define the mapping $w\rightarrow u$ by
\begin{equation}\label{mapping w to u when x<0}
    \widetilde{f}(t_0(w),w)=a\ln (-u)-u+\gamma
\end{equation}
with
\begin{equation}\label{}
    u(w_-)=a,
\end{equation}
where $\gamma$ is a constant depending on the parameters $a$ and $b$
in (\ref{a and b}). The final expansion is in terms of the gamma
function, and we have
\begin{equation}\label{result a<0}
      t_n(x,N+1)\sim \frac{(-1)^n\Gamma(n+N+2)N^{-a N}e^{N\gamma}}{\Gamma(n+1)\Gamma(N-n+1)\Gamma(-a
  N+1)}\sum_{l=0}^{\infty}\frac{c_l}{N^{l+\frac{1}{2}}},
\end{equation}
where $c_l$ are constants that can be given recursively.

\section{KUMMER-TYPE EXPANSION}
As $b\rightarrow 0$, some of the steps in Section 2 are no longer
valid. Let us first examine the mapping $t\rightarrow \tau$ given in
(\ref{mapping t to tau}). In this mapping, we have used the fact
that for each fixed $w\in\gamma_1$, the saddle point $t=t_0(w)$ in
(\ref{saddle points for t}) is bounded away from $t=0$ and $t=1$,
and the steepest descent path from $t=0$ to $t=1$, passing through
$t_0(w)$, is mapped onto $(-\infty,\infty)$ in the $\tau$-plane.
However, from (\ref{saddle points for t}), we have for any fixed $w$
\begin{equation}\label{property of t0(w)}
    t_0(w)=1-b+O\left(b^2\right)\qquad \text{as }b\rightarrow 0;
\end{equation}
that is, as $b\rightarrow 0$, $t_0(w)$ approaches the branch point
$t=1$ in the t-plane. Thus, the mapping (\ref{mapping t to tau}) is
no longer suitable in this case. Since we are more interested in the
neighbourhood of $t=1$, where the term $b\ln(1-t)$ in the phase
function (\ref{phase function f for x>0}) becomes singular, we
introduce the mapping
\begin{equation}\label{mapping t to tau case 1}
    f(t,w)=b \ln \tau-\tau+A,
\end{equation}
where the constant $A$ does not depend on $t$ or $\tau$ (but may
depend on $w$); see (\ref{constant A}) below. To make the mapping
$t\rightarrow \tau$ defined in (\ref{mapping t to tau case 1})
one-to-one and analytic, we prescribe $t=t_0(w)$ to correspond to
$\tau=b$, which is the saddle point of $b\ln\tau-\tau+A$; i.e.,
\begin{equation}\label{}
    \tau(t_0(w))=b.
\end{equation}
This gives
\begin{equation}\label{constant A}
    A=f(t_0(w),w)-b\ln b+b.
\end{equation}
Note that we have $\tau=0$ when $t=1$, and $\tau=+\infty$ when
$t=0$. Furthermore, from (\ref{mapping t to tau case 1}), we have
\begin{equation}\label{dt over dtau b-0 case 1}
        \begin{split}
            \frac{\mathrm{d}t}{\mathrm{d}\tau}& =\frac{b/\tau-1}{\partial f(t,w)/\partial
            t}=\frac{(\tau-b) t(1-t)[1-(1-t)w]}{\tau(1+b)w(t-t^+_0(w))(t-t^-_0(w))},\quad\quad \tau\neq
            b,\\
            \frac{\mathrm{d}t}{\mathrm{d}\tau}&=\left\{\frac{t^+_0(w)(1-t^+_0(w))[1-(1-t^+_0(w))w]}{b\sqrt{1+4b^2(w-1)w}}\right\}^{1/2}, \hspace{1.6cm}
            \tau=b,
        \end{split}
\end{equation}
where we have used L'H$\hat{o}$pital's rule for $\tau=b$ and
\begin{equation*}
    t_0^{\pm}(w)=\frac{2w-1\pm\sqrt{1+4b^2(w-1)w}}{2(1+b)w};
\end{equation*}
cf.(\ref{saddle points for t}). Coupling (\ref{IR of DCP for x>0})
and (\ref{mapping t to tau case 1}), we have, instead of (\ref{IP
after the t mapping}),
\begin{equation}\label{IP after the t mapping b-0}
\begin{split}
t_n(x,N+1)=& \frac{(-1)^{n+1}}{2\pi
i}\frac{\Gamma(n+N+2)e^nb^{-n}}{\Gamma(n+1)\Gamma(N-n+1)}\\
&\times \int_{0}^{+\infty}\int_{\gamma_1}\frac{1}{w-1}e^{N
f({t_0(w)},w)}e^{N(b\ln \tau-\tau)}\frac{\mathrm{d}t}{\mathrm{d}
\tau}\mathrm{d}w\mathrm{d}\tau.
\end{split}
\end{equation}

%\textbf{Proof of (\ref{dt over dtau b-0 case 1}):}\\
%
%From (\ref{phase function f for x>0}), we have
%\begin{equation}\label{q1}
%\begin{split}
%    \frac{\mathrm{d}f(t,w)}{\mathrm{d}t}=&\frac{b}{t-1}-\frac{1-b}{t}-\frac{bw}{1-(1-t)w}\\
%    =&\frac{(1+b)w\left[t-t_0^+(w)\right]\left[t-t_0^-(w)\right]}{t(t-1)\left[1-(1-t)w\right]},
%\end{split}
%\end{equation}
%where $t_0^\pm(w)$ are given by (\ref{q2}). Substituting this into
Next, let us examine the mapping (\ref{mapping w to u}). We note
that this mapping still works in the present case; the only
difference is that for fixed $n/N\in(0,1)$, the constant $\eta$ in
the mapping (\ref{mapping w to u}) is bounded away from 0, whereas
when $n/N\rightarrow 0$, $\eta$ approaches 0. More precisely, we
have $\eta\sim -b^2$. This can roughly be seen from the saddle
points (\ref{saddle points of w}) and (\ref{saddle points of u}),
together with their correspondence relation (\ref{the relation
between w+- and u+-}) under the mapping $w\rightarrow u$ given in
(\ref{mapping w to u}). To prove this rigorously, we give the
following lemma.
\begin{lem}\label{lemma1}
Let $\eta$ be the constant defined in the mapping (\ref{mapping w to
u}). If $a/b^2=o(1)$ (or, equivalently, $xN/n^2=o(1)$), then we have
\begin{equation}\label{eta and b in lemma}
    \eta=-b^2+o(b^2) \quad\text{ as }b\rightarrow 0.
\end{equation}

\end{lem}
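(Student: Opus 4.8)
The plan is to pin down $\eta$ from the two conditions that the mapping (\ref{mapping w to u}) must satisfy at the saddle points. Evaluating (\ref{mapping w to u}) at $w=w_+$ and at $w=w_-$ and invoking the correspondence (\ref{the relation between w+- and u+-}) gives $f(t_0(w_+),w_+)=\psi(u_-)+\gamma$ and $f(t_0(w_-),w_-)=\psi(u_+)+\gamma$. Subtracting removes the unknown constant $\gamma$ and leaves
\[
f(t_0(w_+),w_+)-f(t_0(w_-),w_-)=\psi(u_-)-\psi(u_+),
\]
a single equation that, through (\ref{saddle points of u}), determines $\eta$ in terms of $a$ and $b$. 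I would expand both sides as $b\to0$ under $a/b^2=o(1)$ and solve for $\eta$.

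For the right-hand side, the saddle-point equation $\eta u(u-1)=a$ for $\psi$ yields the Vieta relations $u_++u_-=1$ and $u_+u_-=-a/\eta$. Since $u_\pm-1=-u_\mp$, the logarithmic parts combine and
\[
\psi(u_-)-\psi(u_+)=2a\ln\frac{u_-}{u_+}-\sqrt{\eta^2+4a\eta}.
\]
Writing $\epsilon=a/b^2$ and anticipating $\eta<0$ of order $b^2$ (so that $\eta<-4a$ and the radical is real), one finds $u_+=-a/\eta+\cdots$ small and $u_-=1+a/\eta+\cdots$, whence $u_-/u_+\asymp|\eta|/a$ and therefore $2a\ln(u_-/u_+)=O\big(a\ln(1/\epsilon)\big)=o(b^2)$, using $\epsilon\ln(1/\epsilon)\to0$.

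The crux is the left-hand side. Because $\partial f/\partial t=0$ at $t=t_0(w)$, the function $F(w):=f(t_0(w),w)$ satisfies $F'(w)=(\partial f/\partial w)(t_0(w),w)$, so
\[
f(t_0(w_+),w_+)-f(t_0(w_-),w_-)=\int_{w_-}^{w_+}\frac{\partial f}{\partial w}(t_0(w),w)\,dw
\]
along the real segment joining $w_-=a/b^2+\cdots$ and $w_+=1-a/b^2+\cdots$, both in $(0,1)$ by (\ref{saddle points of w}); on this segment the integrand is real and finite. Differentiating (\ref{phase function f for x>0}) gives $(\partial f/\partial w)(t_0(w),w)=-a/[w(w-1)]-b(1-t_0(w))/[1-(1-t_0(w))w]$. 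The first term integrates to $O\big(a\ln(1/\epsilon)\big)=o(b^2)$. For the second, I would establish $1-t_0(w)=b+O(b^2)$ uniformly on the segment, directly from (\ref{saddle points for t}) and (\ref{property of t0(w)}); then $-b(1-t_0(w))/[1-(1-t_0(w))w]=-b^2+O(b^3)$ uniformly, and integration over a segment of length $w_+-w_-=1+O(\epsilon)$ yields
\[
f(t_0(w_+),w_+)-f(t_0(w_-),w_-)=-b^2+o(b^2).
\]

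Matching the two sides gives $-\sqrt{\eta^2+4a\eta}+2a\ln(u_-/u_+)=-b^2+o(b^2)$, which I would solve by a short bootstrap. On $(-\infty,-4a)$ the map $\eta\mapsto-\sqrt{\eta^2+4a\eta}$ decreases strictly from $0$ to $-\infty$, so the relevant root is unique; a dominant-balance argument forces $|\eta|\asymp b^2$, and with this order bound the terms $2a\ln(u_-/u_+)$ and $4a\eta$ are $o(b^2)$ and $o(b^4)$, respectively. Hence $\eta^2=b^4+o(b^4)$, and since $\eta<0$ we obtain $\eta=-b^2+o(b^2)$. I expect the main obstacle to be the left-hand-side estimate, specifically verifying that $1-t_0(w)=b+O(b^2)$ holds \emph{uniformly} across the whole segment $[w_-,w_+]$: by (\ref{property of t0(w)}) the saddle $t_0(w)$ approaches the branch point $t=1$ as $b\to0$, and one must confirm that this approximation survives at the endpoints $w\to0,1$ (where $t_0(w)\to1-b$ as well) so that the remainder is genuinely $o(b^2)$.
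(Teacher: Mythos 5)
Your proposal is correct and takes essentially the same route as the paper: evaluate the mapping (\ref{mapping w to u}) at the two saddle points, use the correspondence (\ref{the relation between w+- and u+-}) and subtract to eliminate $\gamma$, then match the asymptotics of the two sides, your right-hand-side computation being identical to the paper's (\ref{temp rhs}). The only variations are technical: you compute the left-hand side by integrating $\partial f/\partial w$ along $[w_-,w_+]$ via the envelope identity together with the uniform estimate $1-t_0(w)=b+O(b^2)$ (which indeed holds uniformly on $[0,1]$ by (\ref{temptemp111}), since the error term $O\left(b^4w(w-1)^2\right)$ is uniformly $O(b^4)$ there, so the obstacle you flag is harmless), whereas the paper substitutes the endpoint expansions (\ref{temp 8}) directly into $f$ to obtain (\ref{temp lhs}); and you make explicit, via monotonicity of $\eta\mapsto-\sqrt{\eta^2+4a\eta}$ and a bootstrap, the final solve for $\eta$ that the paper leaves implicit.
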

\begin{proof}
From (\ref{saddle points for t}), we have for any fixed $w$
\begin{equation}\label{temptemp111}
    t_0(w)=\frac{1}{1+b}-\frac{b^2(1-w)}{1+b}-\frac{b^4w(w-1)^2}{1+b}+O\left(b^6
    w^2(w-1)^3\right) \quad \text{ as } b\rightarrow 0,
\end{equation}
where the term $O\left(b^4 w(w-1)^2\right)$ holds uniformly when
either $w$ or $1-w$ is small. If $a/b^2\rightarrow 0$, from
(\ref{saddle points of w}) we have $w_-=1-w_+\rightarrow 0^+$ and
both $w_\pm$ bounded. Thus,
\begin{equation}\label{temp 8}
    \begin{split}
    t_0(w_-)&=1-b+b^2(1-b)w_-+O\left(b^4 w_-^2\right),\\
    t_0(w_+)&=\frac{1}{1+b}-\frac{b^2}{1+b}w_-+O\left(b^4
    w_-^2\right).
    \end{split}
\end{equation}
To obtain $\eta$, we use (\ref{mapping w to u}) and (\ref{the
relation between w+- and u+-}). First, substituting (\ref{temp 8})
in (\ref{phase function f for x>0}) gives
\begin{equation}\label{temp lhs}
    f(t_0(w_+),w_+)-f(t_0(w_-),w_-)=2a\ln\frac{1-w_-}{w_-}-b^2+O\left(b^3\right).
\end{equation}
Here, we have made use of the fact that if $w_\pm$ lie on the upper
edge of the cut along the interval $0<w<1$, then we have
$\ln(w_\pm-1)=\ln(1-w_\pm)+\pi i$. Similarly, if $w_\pm$ lie on the
lower edge of the cut, then $\ln(w_\pm-1)=\ln(1-w_\pm)-\pi i$. Next,
we have from (\ref{psi function}) and (\ref{saddle points of u})
\begin{equation}\label{temp rhs}
    \psi (u_-)-\psi(u_+)=2a\ln\frac{1-u_+}{u_+}+\eta \sqrt{1+\frac{4a}{\eta}}.
\end{equation}
Note that $u_++u_-=1$ and $\eta<0$. Formula (\ref{eta and b in
lemma}) now follows from a combination of (\ref{mapping w to u}),
(\ref{the relation between w+- and u+-}), (\ref{temp lhs}) and
(\ref{temp rhs}).
\end{proof}

What Lemma 1 says is that if $a/b^2\rightarrow 0$ and $x=O(1)$ or
$x\gg O(1)$, then we have $|\eta|N\sim b^2 N=x b^2/a\rightarrow
\infty$, i.e., $\eta N$ is large.\\

Coupling (\ref{IP after the t mapping b-0}) and (\ref{mapping w to
u}), we have
\begin{equation}\label{IP after the t and w mapping b-0}
\begin{split}
t_n(x,N+1)=& \frac{(-1)^{n+1}\Gamma(n+N+2)e^nb^{-n}e^{N\gamma}}{2\pi
i\Gamma(n+1)\Gamma(N-n+1)}\\
&\times \int_{0}^{+\infty}\int_{\gamma_1}\frac{h(u,\tau)}{u-1}e^{N
\psi(u)}e^{N(b\ln \tau-\tau)}\mathrm{d}w\mathrm{d}\tau,
\end{split}
\end{equation}
where
\begin{equation}\label{h function in kummer case}
 h(u,\tau)=\frac{u-1}{w-1}\frac{\mathrm{d}
w}{\mathrm{d} u}\frac{\mathrm{d} t}{\mathrm{d} \tau},
\end{equation}
$\mathrm{d} w/\mathrm{d} u$ is given by (\ref{dw over du, normal})
and (\ref{dw over du, saddle point}) and $\mathrm{d} t/\mathrm{d}
\tau$ is given by (\ref{dt over dtau b-0 case 1}). Following the
same integration-by-parts procedure outlined in Section 2, we let
$h_0(u,\tau)=h(u,\tau)$ in (\ref{h function in kummer case}), and
define $h_l(u,\tau)$ recursively by (\ref{hl recursively 1}),
(\ref{hl recursively 2}) and (\ref{expansion of coeffiencet an and
bn}). The final result is
%\begin{equation}\label{part II result case 1}
%\begin{split}
%  t_n(x,N+1) \sim \frac{(-1)^{n+1}\Gamma(n+N+2)e^nb^{-n}e^{N\gamma}}{\Gamma(N-n+1)N^{n+1}} \Biggr[& \mathbf{M}(a N+1,1,\eta N)\sum_{l=0}^{\infty}\frac{c_l}{(\eta N)^{l}}\\
%   & \left. +\mathbf{M}'(a N+1,1,\eta N)\sum_{l=0}^{\infty}\frac{d_l}{(\eta
%   N)^{l}}\right],
%\end{split}
%\end{equation}

\begin{align}\label{part II result case 1}
  t_n(x,N+1) = &\frac{(-1)^{n+1}\Gamma(n+N+2)e^nb^{-n}e^{N\gamma}}{\Gamma(N-n+1)N^{n+1}}\nonumber\\
   &\times\Biggr[ \mathbf{M}(a N+1,1,\eta N)\sum_{l=0}^{p-1}\frac{c_l}{(\eta N)^{l}}+\mathbf{M}'(a N+1,1,\eta N)\sum_{l=0}^{p-1}\frac{d_l}{(\eta
   N)^{l}}+\varepsilon_p\Biggr],
\end{align}
where
\begin{eqnarray}
  c_l &=&\frac{N^{n+1}}{\Gamma(n+1)}\int_0^\infty a_l(\tau)\tau^ne^{-N\tau}\mathrm{d}\tau \sim\sum_{m=0}^\infty a_{l,m}\frac{\Gamma(n+m+1)}{\Gamma(n+1)N^m}, \label{c_l}\label{cl when a>0}\\
  d_l& =&\frac{N^{n+1}}{\Gamma(n+1)}\int_0^\infty b_l(\tau)\tau^ne^{-N\tau}\mathrm{d}\tau \sim \sum_{m=0}^\infty b_{l,m}\frac{\Gamma(n+m+1)}{\Gamma(n+1)N^m}\label{d_l}\label{d_l when
  a>0}
\end{eqnarray}
and
\begin{equation}\label{}
    \varepsilon_p=\frac{N^{n+1}}{2\pi i \Gamma(n+1)(\eta
    N)^p}\int_0^{+\infty}\int_{\gamma_1}\frac{h_p(u,\tau)}{u-1}e^{N
\psi(u)}e^{N(b\ln \tau-\tau)}\mathrm{d}w\mathrm{d}\tau.
\end{equation}
%\begin{itemize}
%  \item The asymptotic expansion is valid when $n\rightarrow \infty$.\\
%\end{itemize}
Note that since $b=n/N\rightarrow 0$, the series in (\ref{cl when
a>0}) and (\ref{d_l when a>0}) are asymptotic. Moreover, since $\eta
N$ is large in this case, (\ref{part
II result case 1}) is indeed a compound asymptotic expansion as $N\rightarrow\infty$.\\

Now, we consider the subcase in which $a/b^2\rightarrow 0$ and
$x=o(1)$. In this case, the expansion in (\ref{part II result case
1}) is still asymptotic as long as $|\eta| N\sim b^2 N\rightarrow
\infty$ or, equivalently, $n\gg O(\sqrt{N})$. If $n=O(\sqrt{N})$,
then the series in (\ref{relation between DCP and Hahn}) is itself
an asymptotic expansion as $N\rightarrow\infty$. This completes our
discussion of all three cases listed under the condition ``$xN/n^2$
small" in Table 1.

\section{AIRY-TYPE EXPANSION}
For the case $a/b^2\in[\gamma, M]$, where $\gamma$ is a small
positive number and $M$ is a large positive number, the saddle
points $w_\pm$ in (\ref{saddle points of w}) are bounded away from
the singularities $w=0$, $w=1$ and $w=\infty$, and coalesce with
each other when $a/b^2\rightarrow \frac{1}{4(1-a)}$. Therefore, to
derive an asymptotic expansion uniformly for $a/b^2\in[\gamma,M]$,
we need the cubic transformation (see (\ref{airy transformation in
case 2 b-0}) below) introduced by Chester, Friedman and Ursell
\cite{Ursell}. The resulting expansion is in terms of the Airy function $\text{Ai}(\cdot)$.\\

Following the same argument as in Section 3, we again use the
mapping from $t\rightarrow \tau$ in (\ref{mapping t to tau case 1}),
and start with the integral representation (\ref{IP after the t
mapping b-0})
\begin{equation}\label{temp 3}
\begin{split}
t_n(x,N+1)=& \frac{(-1)^{n+1}}{2\pi
i}\frac{\Gamma(n+N+2)e^nb^{-n}}{\Gamma(n+1)\Gamma(N-n+1)}\\
&\times \int_{0}^{+\infty}\int_{\gamma_1}\frac{1}{w-1}e^{N
f({t_0(w)},w)}e^{N(b\ln \tau-\tau)}\frac{\mathrm{d}t}{\mathrm{d}
\tau}\mathrm{d}w\mathrm{d}\tau,
\end{split}
\end{equation}
where the integration path is described in the line below
(\ref{phase function f for x>0}). To proceed further, we divide
$\gamma_1$ into two parts, and denote the part in the upper half of
the plane by $\gamma^{(+)}$, and the other part in the lower half of
the plane by $\gamma^{(-)}$. Recall from (\ref{phase function f for
x>0}) that the phase function in (\ref{temp 3}) is given by
\begin{equation}\label{phase function in case Airy}
\begin{split}
f(t_0(w),w)=& b \ln(1-t_0(w))+(1-b)\ln t_0(w)+a\ln w\\
& -a \ln(w-1)+b \ln[1-(1-t_0(w))w],
\end{split}
\end{equation}
which has a cut $(-\infty,1]$ in $w$-plane. Put
\begin{equation}\label{phase function in case Airy modified}
\begin{split}
    \overline{f}(t_0(w),w)=&b \ln(1-t_0(w))+(1-b)\ln t_0(w)+a\ln w\\
    &-a\ln (1-w)+b\ln[1-(1-t_0(w))w],
\end{split}
\end{equation}
which has cuts $(-\infty,0]$ and $[1,\infty)$. From (\ref{phase
function in case Airy}) and (\ref{phase function in case Airy
modified}), we have
\begin{equation}\label{temp 1}
f(t_0(w),w)=\overline{f}(t_0(w),w)+a\pi i
\end{equation}
for $w\in \gamma^{(+)}$, and
\begin{equation}\label{temp 2}
f(t_0(w),w)=\overline{f}(t_0(w),w)-a\pi i
\end{equation}
for $w\in \gamma^{(-)}$. We now make the standard transformation
\begin{equation}\label{airy transformation in case 2 b-0}
    \overline{f}(t_0(w),w)=\frac{1}{3}u^3-\zeta u+A,
\end{equation}
with the correspondence between the critical points of the two sides
prescribed by
\begin{equation}\label{airy transformation mapping of saddle points b-0}
    w_+\leftrightarrow \sqrt{\zeta},\qquad \quad w_-\leftrightarrow-\sqrt{\zeta}.
\end{equation}
If $\zeta>0$, then $\pm\sqrt{\zeta}$ are both real; if $\zeta<0$,
then $\pm\sqrt{\zeta}$ are complex conjugates and purely imaginary.
The values of $A$ and $\zeta$ can be obtained by using (\ref{airy
transformation in case 2 b-0}) and (\ref{airy transformation mapping
of saddle points b-0}). We also have
\begin{equation}\label{dw over du case 2 airy}
        \begin{split}
            \frac{\mathrm{d}w}{\mathrm{d}u}& =\frac{(u-\sqrt{\zeta})(u+\sqrt{\zeta})w(1-w)\left[(2a-1)-\sqrt{1+4b^2w^2-4b^2 w}\right]}{-2b^2(w-w_+)(w-w_-)},\quad u\neq
            \pm\sqrt{\zeta},\\
            \frac{\mathrm{d}w}{\mathrm{d} u}&=\left\{\frac{2\sqrt{\zeta}a(1-2a)(1-a)}{b^3\sqrt{b^2-4a+4a^2}}\right\}^{1/2},\hspace{1.6cm}
            u=\pm\sqrt{\zeta},
        \end{split}
\end{equation}
where we have again used L'H$\hat{o}$pital's rule for
$u=\pm\sqrt{\zeta}$. Let us first consider the case $\zeta<0$, and
deform the image of the contour $\gamma^{(+)}$ under the mapping
$w\rightarrow u$ defined in (\ref{airy transformation in case 2
b-0}) to the steepest descent path of $\frac{1}{3}u^3-\zeta u+A$ in
the $u$-plane which passes through $\sqrt{\zeta}$. We denote the
path by $C_2$. Similarly, we deform the image of the contour
$\gamma^{(-)}$, and denote the steepest descent path passing through
$-\sqrt{\zeta}$ by $C_3$; see Figure 1. Next, we consider the case
$\zeta>0$. Note that here the saddle points $\pm \sqrt{\zeta}$ are
real, and that the contours $C_2$ and $C_3$ pass through both of
them; see Figure 2. Clearly, in both cases, $C_3$ is the reflection
of $C_2$ with respect to the real axis in the $u$-plane.\\

Let $C_1$ denote the dotted curve shown in Figures 1 and 2, and
recall the identities $e^{iaN\pi}=\cos aN\pi+i\sin aN\pi$ and
$e^{-iaN\pi}=\cos aN\pi-i\sin aN\pi$. A combination of (\ref{temp
3}), (\ref{temp 1}), (\ref{temp 2}) and (\ref{airy transformation in
case 2 b-0}) then gives
\begin{equation}\label{after the airy transform}
\begin{split}
t_n(x,N+1)=& \frac{(-1)^{n}\Gamma(n+N+2)e^nb^{-n}e^{N A}}{\Gamma(n+1)\Gamma(N-n+1)}\\
&\times\left\{\frac{\cos{aN\pi}}{2\pi i}
\int_{0}^{+\infty}\int_{C_1}h_0(u,\tau)e^{N
\left(\frac{1}{3}u^3-\zeta
u \right)}e^{N(b\ln \tau-\tau)}\mathrm{d}u\mathrm{d}\tau\right.\\
&\quad\left.+\frac{\sin{aN\pi}}{2\pi }
\int_{0}^{+\infty}\int_{C_3-C_2}h_0(u,\tau)e^{N
\left(\frac{1}{3}u^3-\zeta u\right)}e^{N(b\ln
\tau-\tau)}\mathrm{d}u\mathrm{d}\tau\right\},
\end{split}
\end{equation}
where
\begin{equation}\label{h0 in airy case}
    h_0(u,\tau)=\frac{1}{w-1}\frac{\mathrm{d}t}{\mathrm{d}
\tau}\frac{\mathrm{d}w}{\mathrm{d} u},
\end{equation}
$\mathrm{d}t/\mathrm{d} \tau$, $\mathrm{d}w/\mathrm{d} u$ are given
respectively by (\ref{dt over dtau b-0 case 1}) and (\ref{dw over du
case 2 airy}).\\
\vspace{2.5cm}
\begin{figure}[h]
\centering
\begin{minipage}[t]{0.45\textwidth}
\centering
  \includegraphics[width=100pt,bb=110 0 330 170]{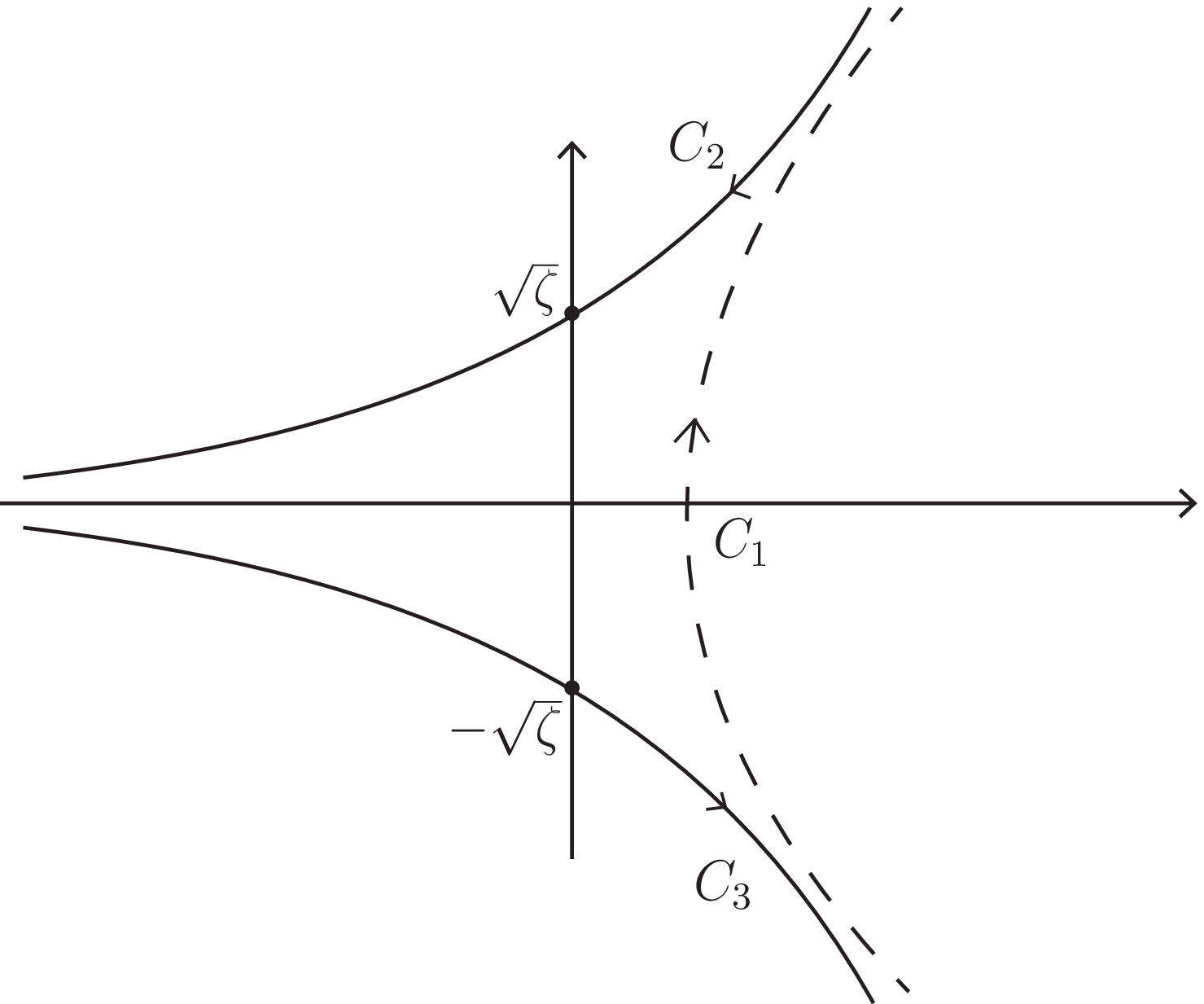}\\
  \caption{$C_2$ and $C_3$ $(\zeta<0)$.}\label{SDP of airy 2}
\end{minipage}\hfill
\begin{minipage}[t]{0.4\textwidth}
\centering
  % Requires \usepackage{graphicx}
  \includegraphics[width=90pt,bb=180 0 350 170]{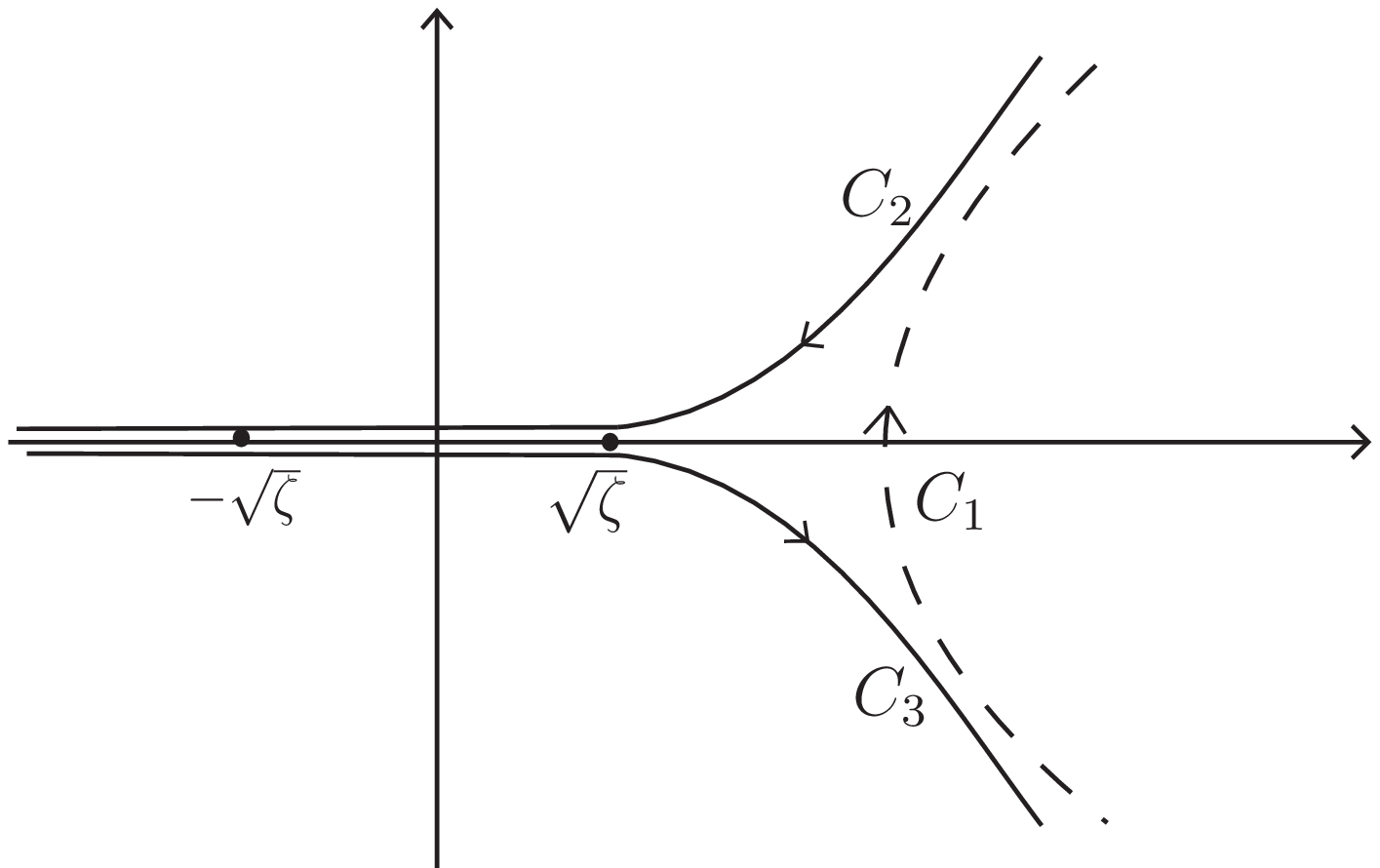}\\
  \caption{$C_2$ and $C_3$ $(\zeta>0)$.}\label{SDP of airy 1}
\end{minipage}\hfill
\end{figure}

%\begin{equation}
%\begin{split}
%   % \epsilon^+_p\sim& \text \left\{\frac{e^{-ia\pi N+\frac{1}{2}\eta N}}{\sqrt{\pi} i\et14/*--*/a
%%    N^{p+3/2}}\left[p_0e^{\frac{2}{3}\pi i}N^{-\frac{1}{3}}\text{Ai}\left(N^{\frac{2}{3}}\zeta e^{\frac{2}{3}\pi i}\right)\right.\right.\\
%dd&\\
%    &\hspace{3.5cm}\left.-q_0e^{\frac{4}{3}\pi
%    i}N^{-\frac{2}{3}}\text{Ai}'\left(N^{\frac{2}{3}}\zeta e^{\frac{2}{3}\pi
%    i}\right)\right]\Biggr\},
%
%    \end{split}
%\end{equation}
\subsection{When $\zeta$ is bounded}
Following the standard integration-by-parts procedure
\cite[p.368]{R.Wong's book}, we define recursively
\begin{eqnarray}
  h_l(u,\tau) &=& a_l(\tau)+b_l(\tau)u-(u^2-\zeta)g_l(u,\tau),\label{h_l 1 airy}\\
  h_{l+1}(u,\tau)&=&\frac{\partial
g_l(u,\tau)}{\partial u}.\label{h_l 2 airy}
\end{eqnarray}
Furthermore, expand
\begin{equation}\label{airy coefficient expanded near tau=0}
       a_l(\tau)=\sum_{j=0}^{\infty} a_{l,j}\tau^j, \qquad       b_l(\tau)=\sum_{j=0}^{\infty}
       b_{l,j}\tau^j.
\end{equation}
From (\ref{after the airy transform}), (\ref{h_l 1 airy}), (\ref{h_l
2 airy}) and (\ref{airy coefficient expanded near tau=0}), it
follows
%\begin{equation}\label{expansion in Airy case 2}
%\begin{split}
%t_n(x,N+1)=& \frac{(-1)^{n}\Gamma(n+N+2)e^nb^{-n}e^{N
%A}}{\Gamma(N-n+1)N^{n+1}}\\
%&\times\left\{\cos{aN\pi}\left[\text{Ai}(N^{2/3}\zeta)\left(\sum_{l=0}^{p-1}\frac{c_l}{N^{l+1/3}}+\delta_1\right)-\text{Ai}'(N^{2/3}\zeta)\left(\sum_{l=0}^{p-1}\frac{d_l}{N^{l+2/3}}+\delta_2\right)
%\right]
%\right.\\
%&\quad+\sin{aN\pi}\left[
%\text{Bi}(N^{2/3}\zeta)\left(\sum_{l=0}^{p-1}\frac{c_l}{N^{l+1/3}}+\delta_3\right)-\text{Bi}'(N^{2/3}\zeta)\left(\sum_{l=0}^{p-1}\frac{d_l}{N^{l+2/3}}+\delta_4\right)\right]\\
%&\quad+\varepsilon_{p}\Biggr\},
%\end{split}
%\end{equation}
%\begin{small}
\begin{align}\label{expansion in Airy case 2}
 t_n(x,N+1)=& \frac{(-1)^{n}\Gamma(n+N+2)e^nb^{-n}e^{N
A}}{\Gamma(N-n+1)N^{n+1}}   \notag \\
&\times\left[\cos{aN\pi}\left(\text{Ai}(N^{2/3}\zeta)\sum_{l=0}^{p-1}\frac{c_l}{N^{l+1/3}}-\text{Ai}'(N^{2/3}\zeta)\sum_{l=0}^{p-1}\frac{d_l}{N^{l+2/3}}
\right)
\right. \\
&\left.\quad+\sin{aN\pi}\left(
\text{Bi}(N^{2/3}\zeta)\sum_{l=0}^{p-1}\frac{c_l}{N^{l+1/3}}-\text{Bi}'(N^{2/3}\zeta)\sum_{l=0}^{p-1}\frac{d_l}{N^{l+2/3}}\right)+\varepsilon_{p}\right],\notag
\end{align}
%\end{small}
where
\begin{equation}\label{error term in expansion in case 2}
\begin{split}
\varepsilon_p=&\frac{\cos{(aN\pi)}N^{n+1}}{2\pi iN^p\Gamma{(n+1)}}
\int_{0}^{+\infty}\int_{C_1}h_p(u,\tau)e^{N
\left(\frac{1}{3}u^3-\zeta
u \right)}e^{N(b\ln \tau-\tau)}\mathrm{d}u\mathrm{d}\tau\\
&+\frac{\sin{(aN\pi)}N^{n+1}}{2\pi iN^p\Gamma{(n+1)}}
\int_{0}^{+\infty}\int_{C_3-C_2}h_p(u,\tau)e^{N
\left(\frac{1}{3}u^3-\zeta u\right)}e^{N(b\ln
\tau-\tau)}\mathrm{d}u\mathrm{d}\tau,
\end{split}
\end{equation}
and the coefficients are given by
\begin{equation}\label{airy coefficient in case 2}
\begin{split}
  c_l &=\frac{N^{n+1}}{\Gamma(n+1)}\int_0^\infty a_l(\tau)\tau^ne^{-N\tau}\mathrm{d}\tau\sim \sum_{m=0}^\infty a_{l,m}\frac{\Gamma(n+m+1)}{\Gamma(n+1)N^m}, \\
  d_l& =\frac{N^{n+1}}{\Gamma(n+1)}\int_0^\infty b_l(\tau)\tau^ne^{-N\tau}\mathrm{d}\tau\sim \sum_{m=0}^\infty b_{l,m}\frac{\Gamma(n+m+1)}{\Gamma(n+1)N^m}.
  \end{split}
\end{equation}
In the present case, $a/b^2\in[\delta, M]$ or, equivalently, $w_\pm$
are bounded. Thus $\zeta$ is also bounded in view of (\ref{airy
transformation mapping of saddle points b-0}). Therefore, it is easy
to prove that (\ref{expansion in Airy case 2}) is asymptotic; for
details, see  \cite[p.371-372]{R.Wong's book}.

\subsection{When $\zeta\rightarrow -\infty$}
When $a/b^2\rightarrow \infty$, by (\ref{saddle points of w}) the
saddle points $w_\pm$ approach $\frac{1}{2}\pm i\infty$,
respectively, along the line $\text{Re }w=\frac{1}{2}$. In view of
(\ref{airy transformation mapping of saddle points b-0}), this is
equivalent to saying that $\zeta\rightarrow -\infty$. To prove that
(\ref{expansion in Airy case 2}) is also asymptotic in this case, we
rewrite the expansion in (\ref{expansion in Airy case 2}) as
\begin{align}\label{expansion in Airy case 3}
t_n(x,N+1)=& \frac{(-1)^{n}\Gamma(n+N+2)e^nb^{-n}e^{N A}}{\Gamma(N-n+1)N^{n+1}}\notag \\
&\times\left[\cos{aN\pi}\left(\frac{\text{Ai}(N^{2/3}\zeta)}{N^{1/3}}\sum_{l=0}^{p-1}\frac{\widetilde{c}_l}{(a
N)^{l}}-\frac{\text{Ai}'(N^{2/3}\zeta)}{N^{2/3}}\sum_{l=0}^{p-1}\frac{\widetilde{d}_l}{(a
N)^{l}}\right)
\right.\\
&\quad\left.\ +\sin{aN\pi}\left(
\frac{\text{Bi}(N^{2/3}\zeta)}{N^{1/3}}\sum_{l=0}^{p-1}\frac{\widetilde{c}_l}{(a
N)^{l}}-\frac{\text{Bi}'(N^{2/3}\zeta)}{N^{2/3}}\sum_{l=0}^{p-1}\frac{\widetilde{d}_l}{(a
N)^{l}}\right)+\varepsilon_{p}\right],\notag
\end{align}
where $\varepsilon_{p}$ is the same as in (\ref{error term in
expansion in case 2}) and
\begin{equation}
\label{coefficient in case 3} \widetilde{c}_l=a^{l}c_l \quad \text{
and }\quad \widetilde{d}_l=a^{l}d_l.
\end{equation}
Thus, it is sufficient to first prove the boundedness of the
coefficients $\widetilde{c}_l$ and $\widetilde{d}_l$, and then
establish the asymptotic nature of the error term in (\ref{expansion
in Airy case 3}), namely, to prove that there exist positive
constants $A_p$, $A_p'$, $B_p$ and $B_p'$ such that
\begin{equation}\label{asymptotic nature of the error terms in airy case}
\begin{split}
    \left|\varepsilon_{p}^1\right|\leq &
    \frac{A_p}{(aN)^{p}}\frac{\widetilde{\text{Ai}}(N^{2/3}\zeta)}{N^{1/3}}+\frac{A_p'}{(aN)^{p}}\frac{\widetilde{\text{Ai}}'(N^{2/3}\zeta)}{N^{2/3}},\\
    \left|\varepsilon_{p}^2\right|\leq &
    \frac{B_p}{(aN)^{p}}\frac{\widetilde{\text{Bi}}(N^{2/3}\zeta)}{N^{1/3}}+\frac{B_p'}{(aN)^{p}}\frac{\widetilde{\text{Bi}}'(N^{2/3}\zeta)}{N^{2/3}},
\end{split}
\end{equation}
where
\begin{equation}\label{error term separated in case 3}
    \begin{split}
    \varepsilon_{p}^1=& \frac{\cos(aN\pi)N^{n+1}}{2\pi iN^p\Gamma(n+1)}
\int_{0}^{+\infty}\int_{C_1}h_p(u,\tau)e^{N
\left(\frac{1}{3}u^3-\zeta u \right)}e^{N(b\ln
\tau-\tau)}\mathrm{d}u\mathrm{d}\tau,\\
    \varepsilon_{p}^2=&\frac{\sin(aN\pi)N^{n+1}}{2\pi i N^p\Gamma(n+1)}
\int_{0}^{+\infty}\int_{C_3-C_2}h_p(u,\tau)e^{N
\left(\frac{1}{3}u^3-\zeta u\right)}e^{N(b\ln
\tau-\tau)}\mathrm{d}u\mathrm{d}\tau,
    \end{split}
\end{equation}
and
$\widetilde{\text{Ai}}(z)=\widetilde{\text{Bi}}(z)=\left[\text{Ai}^2(z)+\text{Bi}^2(z)\right]^{1/2}$
for $z<0$. Note that in the present case, $\zeta<0$.\\

From (\ref{dw over du case 2 airy}), we recall that
$\mathrm{d}w/\mathrm{d}u$ depends on $\zeta$. The value of $\zeta$
is obtained by solving the two equations gotten from (\ref{airy
transformation in case 2 b-0}) with $w$ and $u$ replaced
respectively by $w_\pm$ and $\pm \sqrt{\zeta}$; see
\cite[p.367]{R.Wong's book}. Thus, $\zeta$ and hence
$\mathrm{d}w/\mathrm{d}u$ both depend on the parameters $a$ and $b$
in (\ref{a and b}). As $a$ and $b$ approach zero, $\zeta$ may tend
to infinity. When the saddle points $u_\pm=\pm\sqrt{\zeta}$ are
bounded, the mapping $w\rightarrow u$ defined by the cubic
transformation (\ref{airy transformation in case 2 b-0}) is
analytic, and its derivative $\mathrm{d}w/\mathrm{d}u$ is bounded.
However, in the present case, the saddle points
$u_\pm=\pm\sqrt{-\zeta}i$ go to infinity as $\zeta\rightarrow
-\infty$. Hence, the coefficients $a_l(\tau)$, $b_l(\tau)$ and the
function $h_l(u,\tau)$ given recursively in (\ref{h_l 1 airy}) and
(\ref{h_l 2 airy}) may all blow up as $\zeta\rightarrow -\infty$,
where $h_0(u,\tau)$ is defined by (\ref{h0 in airy case}). Since the
coefficients $c_l$ and $d_l$ in (\ref{expansion in Airy case 2}) are
related to $a_l(\tau)$ and $b_l(\tau)$ via (\ref{airy coefficient in
case 2}) and (\ref{airy coefficient expanded near tau=0}), to prove
that the expansion in (\ref{expansion in Airy case 3}) is asymptotic
when $\zeta\rightarrow -\infty$, we must first give estimates for
the coefficient functions $a_l(\tau)$ and $b_l(\tau)$. To this end,
we shall adopt a method introduced by Olde Daalhuis and Temme
\cite{Olde Daalhuis}. To begin with, we define
\begin{equation}\label{A0 and B0}
    A_0(u,\zeta)=\frac{u}{u^2-\zeta},\qquad
    B_0(u,\zeta)=\frac{1}{u^2-\zeta}.
\end{equation}
Using (\ref{h_l 1 airy}) and the Cauchy residue theorem, it can be
verified that
\begin{equation}\label{a0' and b0'}
    \begin{split}
    a_0(\tau)=&\frac{1}{2\pi
    i}\int_{\Gamma}h_0(u,\tau)A_0(u,\zeta)\mathrm{d}u,\\
    b_0(\tau)=&\frac{1}{2\pi
    i}\int_{\Gamma}h_0(u,\tau)B_0(u,\zeta)\mathrm{d}u,
    \end{split}
\end{equation}
where $\Gamma$ is the contour consisting of two circles, centering
at $\pm\sqrt{-\zeta} i$, both with radius $R$, where $R$ could be as
large as possible until the circles reach the singularities of
$h_0(u,\tau)$ in the $u$-plane.\\

We note that $w=w_\pm$ are removable singularities of
$\mathrm{d}w/\mathrm{d}u$. However, $\mathrm{d}w/\mathrm{d}u$ blows
up at the points $w=w_\pm e^{2k\pi i}$, where $k\neq 0$ is any
integer; see (\ref{dw over du case 2 airy}). To find their image
points in the $u$-plane under the mapping (\ref{airy transformation
in case 2 b-0}), we take $w_k=w_+e^{2k\pi i}$ as an example and
denote its image point by $u_k$. The image point of $w_-e^{2k\pi i}$
can be treated in a similar manner. By (\ref{airy transformation in
case 2 b-0}), we have
\begin{equation}\label{singularities mapping}
    \overline{f}(t_0(w_k),w_k)=\frac{1}{3}u_k^3-\zeta u_k+A.
\end{equation}
From (\ref{airy transformation in case 2 b-0}) and (\ref{airy
transformation mapping of saddle points b-0}), we also have
\begin{equation}\label{singularities mapping 2}
    \overline{f}(t_0(w_+),w_+)=\frac{1}{3}u_+^3-\zeta u_++A.
\end{equation}
Subtracting (\ref{singularities mapping 2}) from (\ref{singularities
mapping}) gives
\begin{equation}\label{temp temp}
    2k\pi a i=i(-\zeta)^{3/2}\left(-\frac{1}{3}z_k^3-z_k^2\right).
\end{equation}
Since $0<a\leq \frac{1}{2}$ and $-\zeta \gg O(1)$, from (\ref{temp
temp}) it follows that $|z_k|\sim
\sqrt{2k\pi}a^{1/2}(-\zeta)^{-3/4}$. Therefore, there exists a
constant $0<c_0<1$, independent of $a$, $b$ and $\zeta$, such that
the interior of the two circles with centres at $u=\pm
\sqrt{-\zeta}i$ and radius $R=c_0 \sqrt{a} (-\zeta)^{-1/4}$ is free
of the singularities of $h_0(u,\tau)$ in the $u$-plane. Since
$h_0(u,\tau)$ is now analytic inside the contour $\Gamma$, there
exists a constant $c_h$ such that
\begin{equation}\label{estimate of h0}
    \left|h_0(u,\tau)\right|\leq c_h h(\tau)
\end{equation}
for $u$ in the domain enclosed by the contour $\Gamma$, where
$h(\tau)$ denotes the maximum of the two functions
$\left|h_0(\pm\sqrt{-\zeta}i,\tau)\right|$. Note that as functions
of $\tau$, $h_0(\pm\sqrt{-\zeta}i,\tau)$ are analytic in
the neighbourhood of steepest descent path in the $\tau$-plane.\\

We further introduce rational functions $A_k$ and $B_k$,
$k=0,1,2...$, defined recursively by
\begin{equation}\label{}
    \begin{split}
    A_{k+1}(u,\zeta)=&\frac{1}{u^2-\zeta}\frac{\mathrm{d}}{\mathrm{d}u}A_k(u,\zeta),\\
    B_{k+1}(u,\zeta)=&\frac{1}{u^2-\zeta}\frac{\mathrm{d}}{\mathrm{d}u}B_k(u,\zeta),
    \end{split}
\end{equation}
where $A_0$ and $B_0$ are given in (\ref{A0 and B0}). By induction,
we can show that $A_k$ and $B_k$ are expressible as
\begin{equation}\label{An and Bn summation}
    \begin{split}
    A_{k}(u,\zeta)=&\sum_{i=0}^{\left[(k+1)/2\right]}\frac{c^1_{k,i} u^{k+1-2i}}{\left(u^2-\zeta\right)^{2k+1-i}},\\
    B_{k}(u,\zeta)=&\sum_{i=0}^{\left[k/2\right]}\frac{c^2_{k,i} u^{k-2i}}{\left(u^2-\zeta\right)^{2k-i}},
    \end{split}
\end{equation}
where $c^1_{k,i}$ and $c^2_{k,i}$ are constants independent of $u$
and $\zeta$. As in (\ref{a0' and b0'}), by Cauchy's theorem, we have
from equations (\ref{h_l 1 airy}) and (\ref{h_l 2 airy})
\begin{equation}\label{IR of a-k}
    \begin{split}
    a_k(\tau)=&\frac{1}{2\pi
    i}\int_{\Gamma}h_k(u,\tau)A_0(u,\zeta)\mathrm{d}u\\
    =&\frac{1}{2\pi
    i}\int_{\Gamma}h_{k-1}(u,\tau)A_1(u,\zeta)\mathrm{d}u-\frac{1}{2\pi
    i}\int_{\Gamma}\left(a_{k-1}(\tau)+b_{k-1}(\tau)u\right)A_1(u,\zeta)\mathrm{d}u\\
    =&\frac{1}{2\pi
    i}\int_{\Gamma}h_{k-1}(u,\tau)A_1(u,\zeta)\mathrm{d}u\\
    \vdots &\\
    =&\frac{1}{2\pi
    i}\int_{\Gamma}h_0(u,\tau)A_k(u,\zeta)\mathrm{d}u,
    \end{split}
\end{equation}
where we have used integration-by-parts to derive the second
equality. The second term in the second equality vanishes because
$\left(a_{k-1}(\tau)+b_{k-1}(\tau)u\right)A_1(u,\zeta)$ is
$O\left(u^{-2}\right)$ as $|u|\rightarrow \infty$ and all poles of
that function lie inside $\Gamma$; see (\ref{An and Bn summation}).
Similarly, we also have
\begin{equation}\label{IR of b-k}
    b_k(\tau)=\frac{1}{2\pi
    i}\int_{\Gamma}h_0(u,\tau)B_k(u,\zeta)\mathrm{d}u.
\end{equation}
Using (\ref{An and Bn summation}), it is easy to obtain the
estimates
\begin{equation}\label{estimate of An and Bn}
    \left|A_k(u,\zeta)\right|\leq C_k a^{-k-1/2} (-\zeta)^{1/4}\quad\text{and}\quad  \left|B_k(u,\zeta)\right|\leq C_k
    a^{-k},
\end{equation}
for $u$ on and inside the contour $\Gamma$ and $-\zeta\rightarrow
\infty$. Here and thereafter, $C_k$ is used as a generic symbol for
constants independent of $u$, $\zeta$, $a$ and $b$. Substituting
(\ref{estimate of An and Bn}) into (\ref{IR of a-k}) and (\ref{IR of
b-k}) gives
\begin{equation}\label{}
    \left|a_k(\tau)\right|\leq C_k h(\tau)a^{-k},
\end{equation}
and
\begin{equation}\label{}
    \left|b_k(\tau)\right|\leq C_k h(\tau)a^{-k+1/2}(-\zeta)^{-1/4}.
\end{equation}
Therefore, $a^la_l(\tau)$ and $a^lb_l(\tau)$ are both bounded for
$\tau$ in the neighbourhood of steepest descent path in the
$\tau$-plane, and of course for $\tau$ in the neighbourhood of
$\tau=0$. Thus we have the boundedness of the coefficients
$\widetilde{c}_l$ and $\widetilde{d}_l$.\\

To estimate $h_k(u,\tau)$, we use the rational functions
$R_k(u,w,\zeta)$, $k=0,1,2...$, defined recursively by
\begin{equation}\label{R-k definition}
    \begin{split}
        R_0(u,w,\zeta)=&\frac{1}{u-w},\\
        R_{k+1}(u,w,\zeta)=&\frac{1}{u^2-\zeta}\frac{\mathrm{d}}{\mathrm{d}u}R_k(u,w,\zeta).
    \end{split}
\end{equation}
These functions were also introduced by Olde Daalhuis and Temme
\cite{Olde Daalhuis}. They showed by induction that $R_k(u,w,\zeta)$
can be written as
\begin{equation}\label{Rn summation form}
    R_k(u,w,\zeta)=\sum_{i=0}^{k-1}\sum_{j=0}^{\min\{i,k-1-i\}}\frac{C_{ij}u^{i-j}}{(u-w)^{k+1-i-j}\left(u^2-\zeta\right)^{k+i}},\qquad\qquad
    k=1,2...,
\end{equation}
where $C_{ij}$ do not depend on $u$, $w$ and $\zeta$. Similar to
(\ref{IR of a-k}), we have
\begin{equation}\label{IR of h-k}
    h_k(w, \tau)=\frac{1}{2\pi
    i}\int_{\Gamma}h_0(u, \tau)R_k(u,w,\zeta)\mathrm{d}u,
\end{equation}
where $\Gamma$ is the same contour used in (\ref{IR of a-k}) and $w$
lies inside two disks centered at $\pm \sqrt{-\zeta} i$ and with
radius $\frac{1}{2}c_0 \sqrt{a} (-\zeta)^{-1/4}$. It is easy to
verify from (\ref{Rn summation form}) that
\begin{equation}\label{estimate of R-k}
    \left|R_k(u,w,\zeta)\right|\leq C_ka^{-k-1/2}(-\zeta)^{1/4},
\end{equation}
and from (\ref{IR of h-k}) that
\begin{equation}\label{estimate of h-k}
    \left|h_k(w,\tau)\right|\leq
    C_k a^{-k}h(\tau).
\end{equation}
Substituting (\ref{estimate of h-k}) into (\ref{error term separated
in case 3}) gives (\ref{asymptotic nature of the error terms in airy
case}); for details, see \cite[p.311-312]{Olde Daalhuis}. Note that
to make the expansion (\ref{expansion in Airy case 3}) asymptotic,
we require $x=aN$ to be large.

\section{BESSEL-TYPE EXPANSION} In the case of Hahn Polynomials $Q_n(x;\alpha,\beta,N)$ given
in (\ref{definition of Q}), Sharapodinov \cite{Sharapodinov} has
given an asymptotic formula involving Jacobi polynomials when the
parameters satisfy $\alpha$, $\beta\geq \frac{1}{2}$ and $2\leq
n\leq c\sqrt{N}$, where $c$ is a positive constant. The values of
the variable $x$ are also required to be large; more precisely,
$x\geq \varepsilon N$ and $\varepsilon>0$ is a small number.
Although discrete Chebyshev polynomials $t_n(x,N)$ given in
(\ref{relation between DCP and Hahn}) is a special case of the Hahn
polynomials, the values of the parameters are $\alpha=\beta=0$; that
is, Sharapodinov's result does not include our case. However, since
the leading term in the uniform asymptotic expansion of the Jacobi
polynomials is a Bessel function (see \cite[p.451]{handbook}), the
work of Sharapodinov did inspire us to look for an asymptotic
expansion for $t_n(x, N+1)$ involving Bessel functions, when the
parameters $a$ and $b$ in (\ref{a and b}) satisfy $a/b^2\rightarrow
\infty$ and the variable $x$ is large. Our method differs completely
from that of
Sharapodinov.\\

%When $\frac{n}{\sqrt{N}}=o(1)$, as $N\rightarrow +\infty$, the
%saddle points $w_\pm$ are unbounded. Thus, we are more interested in
%the neighbourhood of the infinity in $w$-plane. Therefore,
Returning to (\ref{IR of DCP for x>0}), and making the change of
variable $v=1/w$, we have
\begin{equation}\label{IR of DCP for x>0 case 3}
t_n(x,N+1)=\frac{(-1)^{n}}{2\pi
i}\frac{\Gamma(n+N+2)}{\Gamma(n+1)\Gamma(N-n+1)}\int_0^1\int_{\gamma_3}\frac{1}{v(1-v)}
e^{N \widehat{f}(t,v)}\mathrm{d}v\mathrm{d}t,
\end{equation}
where
\begin{equation}\label{phase function f for x>0 2}
\widehat{f}(t,v)=b \ln(1-t)+(1-b)\ln t-a\ln (1-v)-b \ln v+b
\ln[t+v-1];
\end{equation}
the curve $\gamma_3$ starts at $v=+\infty$, runs along the lower
edge of the positive real line towards $v=1$, encircles the point
$v=1$ in the clockwise direction and returns to $+\infty$ along the
upper edge of the positive real line. Note that since $e^{Nb\ln
v}=v^n$, there is no need to have a cut from the origin to
infinity.\\

The saddle point of $\widehat{f}(t,v)$ in the $t$-plane is given by
\begin{equation}\label{saddle point of t in case 3}
    t_0(v)=\frac{2-v+\sqrt{4b^2-4b^2 v+v^2}}{2(1+b)};
\end{equation}
cf.(\ref{saddle points for t}), where the $v$-plane is cut along two
line segments joining $0$ to the two conjugate points
$2b^2\pm2b\sqrt{1-b^2}i$, and the branch of the square root is
chosen so that $\sqrt{4b^2-4b^2 v+v^2}\sim v$ as $v\rightarrow
\infty$. From (\ref{saddle point of t in case 3}), we have
\begin{equation}\label{t0v as v large or fixed}
\begin{split}
    t_0(v)&=1+O(b)\qquad\hspace{-0.2cm} \text{as }|v|\sim cb,\\
    t_0(v)&\sim 1-b\qquad\hspace{0.5cm}\text{as    }v\gg b,
\end{split}
\end{equation}
where $c$ is a positive constant, and for $v\ll b$,
\begin{equation}\label{t0v as v small}
\begin{split}
t_0(v)&\sim 1-\frac{1}{2}v\qquad \text{as }v\rightarrow 0^+,\\
t_0(v)&\rightarrow 1-2b \qquad\text{as } v\rightarrow 0^-,
\end{split}
\end{equation}
where $0^+$ and $0^-$ mean limits approaching $0$ from the
right-hand side of the cut and the left-hand side of the cut,
respectively. Moreover, easy calculation shows that for any $v$,
$t_0(v)$ is not a real number on the cut $(1,\infty)$ in the
$t$-plane. Following the same argument given prior to (\ref{mapping
t to tau case 1}), we introduce the mapping
\begin{equation}\label{mapping t to tau case 3}
    \widehat{f}(t,v)=b \ln \tau-\tau+A
\end{equation}
with the correspondence between the saddle points $t=t_0(v)$ and
$\tau=b$ given by
\begin{equation}\label{correspondence between the saddle points in case 3}
    \tau(t_0(v))=b.
\end{equation}
Coupling (\ref{mapping t to tau case 3}) and (\ref{correspondence
between the saddle points in case 3}) yields
\begin{equation}\label{constant A in case 3}
    A=\widehat{f}(t_0(v),v)-b\ln b+b.
\end{equation}
The zeros of $\partial \widehat{f}(t,v)/\partial t=0$ are given by
\begin{equation}\label{t0pm}
    t_0^{\pm}(v)=\frac{2-v\pm \sqrt{4b^2-4b^2 v+v^2}}{2(1+b)},
\end{equation}
where $t_0^+(v)$ is the relavant saddle point $t_0(v)$ given in
$(\ref{saddle point of t in case 3})$. By straightforward
calculation, we have
\begin{equation}\label{dt over dtau b-0}
        \begin{split}
            \frac{\mathrm{d}t}{\mathrm{d}\tau}& =\frac{b/\tau-1}{\partial \widehat{f}(t,v)/\partial
            t}=\frac{(\tau-b) t(1-t)(t+v-1)}{\tau(1+b)(t-t^+_0(v))(t-t^-_0(v))},\quad\quad \tau\neq
            b,\\
            \frac{\mathrm{d}t}{\mathrm{d}\tau}&=\left\{\frac{t^+_0(v)(1-t^+_0(v))(t^+_0(v)+v-1)}{b\sqrt{4b^2-4b^2v+v^2}}\right\}^{1/2}, \hspace{1.6cm}
            \tau=b.
        \end{split}
\end{equation}
%and particularly, for $\tau=b$ and $v=v_\pm$,
%\begin{equation}\label{}
%    \left.\frac{\mathrm{d}t}{\mathrm{d}\tau}\right|_{\tau=b,v=v_\pm}=\left\{\frac{t_{\pm}(1-t_{\pm})(t_{\pm}-v_{\pm}-1)}{b(1-2a)v_{\pm}}\right\}^{1/2}.
%\end{equation}
Note that we have $\tau=0$ when $t=1$, and $\tau=+\infty$ when
$t=0$. For any fixed $v$, we can deform the original interval of
integration $0\leq t\leq 1$ into a steepest descent path $\Gamma_v$,
passing through $t_0(v)$. Also note that $t_0^-(v)$ is not on
$\Gamma_v$, unless $0<v<1$. Moreover, if $0<v<1$, then
$0<t_0^-(v)<t_0^+(v)<1$ and $\Gamma_v$ is the real interval $[0,1]$.
In our case, there is only one point on the path $\gamma_3$ in the
$v$-plane (see (\ref{IR of DCP for x>0 case 3})), where it crosses
the real line. Let us denote this point by $v_0$. When $v=v_0$, the
mapping which we have introduced in (\ref{mapping t to tau case 3})
becomes singular at the point $t_0^-(v)$, since
$\mathrm{d}t/\mathrm{d}\tau$ in (\ref{dt over dtau b-0}) blows up.
However for this particular case, we only need to slightly modify
the path by replacing part of the original path near this point by a
small half circle as shown in Figure \ref{t SDP with dentation}.
\begin{figure}[!h]
\centering
  % Requires \usepackage{graphicx}
  \includegraphics[scale=0.7, bb=0 80 353 187]{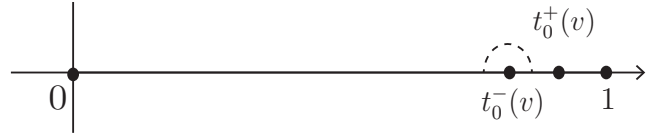}\\
  \caption{The indentation of the integration path in the $t$-plane for $0<v<1$.}\label{t SDP with dentation}
\end{figure}
The corresponding integration path in the $\tau$-plane following the
mapping (\ref{mapping t to tau case 3}) also needs to be modified.
However, this small modification on the integration path will not
affect the following argument and calculation. With this in mind, we
will simply ignore this particular case, and proceed with the
assumption that the integration path in the $\tau$-plane is always
$[0,+\infty)$ for all $w$, and that $\mathrm{d}t/\mathrm{d}\tau$
will not blow up in the neighbourhood of the path.\\

Thus, from (\ref{IR of DCP for x>0 case 3}) and (\ref{mapping t to
tau case 3}), we have
\begin{equation}\label{after t mapping in case 3}
\begin{split}
t_n(x,N+1)=& \frac{(-1)^{n+1}}{2\pi
i}\frac{\Gamma(n+N+2)e^nb^{-n}}{\Gamma(n+1)\Gamma(N-n+1)}\\
&\times \int_{0}^{+\infty}\int_{\gamma_3}\frac{1}{v(1-v)}e^{N
\widehat{f}({t_0(v)},v)}e^{N(b\ln
\tau-\tau)}\frac{\mathrm{d}t}{\mathrm{d}
\tau}\mathrm{d}v\mathrm{d}\tau.
\end{split}
\end{equation}
Here, we rewrite the phase function $\widehat{f}(t_0(v),v)$ as
\begin{equation}\label{f widehat rearranged}
    \widehat{f}(t_0(v),v)=(1-b)\ln t_0(v)+b\ln (1-t_0(v))-a\ln (1-v)-b\ln(\frac{v}{t_0(v)+v-1}).
\end{equation}
Recalling the statements following (\ref{phase function f for x>0
2}) and (\ref{saddle point of t in case 3}), we know that there are
only two cuts in the $v$-plane: one along the infinite interval
$[1,+\infty)$ and the other along the bent line joining the
conjugate points $2b^2\pm 2b\sqrt{1-b^2}i$ and passing through the
origin. To find the saddle points of $\widehat{f}(t_0(v),v)$, we set
\begin{equation*}
\frac{\partial \widehat{f}(t_0(v),v)}{\partial v}=0,
\end{equation*}
and obtain
\begin{equation}\label{saddle points of v}
    v_\pm=\frac{b^2\mp b i\sqrt{4a-4a^2-b^2}}{2a (1-a)}.
\end{equation}
Since (\ref{IR of DCP for x>0 case 3}) is obtained from (\ref{IR of
DCP for x>0}) by making the change of variable $v=1/w$, (\ref{saddle
points of v}) can also be derived from (\ref{saddle points of w}).
Note that in this case, $b^2/a\rightarrow 0$. Furthermore, since
$a<\frac{1}{2}$, the quantity inside the square root is positive.
Hence, $v_\pm$ are distinct, and approach $v=0$.\\

Define
\begin{equation*}
    g(u)=m(u-\frac{1}{u}),
\end{equation*}
where $m>0$ is some constant to be determined. The saddle points of
$g(u)$ are
\begin{equation}\label{saddle points of u in case 3}
    u_\pm=\pm i.
\end{equation}
Make the transformation
\begin{equation}\label{mapping in case 3}
    \begin{split}
    \widehat{f}(t_0(v),v)&=g(u)+\gamma\\
    &=m(u-\frac{1}{u})+\gamma
    \end{split}
\end{equation}
with
\begin{equation}\label{the relation between v+- and u+- case 3}
    u(v_+)=u_-,\quad\quad\quad\quad u(v_-)=u_+.
\end{equation}
Note the fact that $t_0(v_\pm)=t_\pm$, where $t_\pm$ are given in
(\ref{sets of saddle points 1}) and (\ref{sets of saddle points 2}).
This can be seen from (\ref{temptemp123}) and the change of variable
$v=1/w$ that we have made. Thus, substituting (\ref{the relation
between v+- and u+- case 3}) into (\ref{mapping in case 3}) gives
\begin{equation}\label{gamma in the mapping case 3}
\begin{split}
    \gamma&=\frac{1}{2}\ln \frac{1-b}{1+b}+\frac{1}{2}b\ln
    \frac{b^2}{1-b^2}\\
    &=b\ln b-b+O(b^3)
    \end{split}
\end{equation}
and
\begin{equation}
\label{asymptotic behaviour of m}
\begin{split}
 m=-\frac{1}{2}\biggr\{&(1-b)\arctan\frac{b\sqrt{4a-4a^2-b^2}}{2-2a-b^2}-a\arctan\frac{b\sqrt{4a-4a^2-b^2}}{2a-2a^2-b^2}\\
    &\left.-2b\arctan\frac{\sqrt{4a-4a^2-b^2}}{2+b-2a}\right\}\\
 \sim\  b&\arctan
\sqrt{\frac{a}{1-a}},
\end{split}
\end{equation}
when $b^2/a\rightarrow 0$ and $b\rightarrow 0$. Moreover, from
(\ref{mapping in case 3}) we have
\begin{equation}\label{partial v over partial u in case 3}
        \begin{split}
            &\frac{\mathrm{d}v}{\mathrm{d}u}=\frac{m(u-i)(u+i)v(1-v)\left[(1-2a)v+\sqrt{4b^2-4b^2 v+v^2}\right]}{2a(1-a)u^2(v-v_+)(v-v_-)},\quad\quad u\neq
            u_\pm
            ,\\
            &\left.\frac{\mathrm{d}v}{\mathrm{d}
            u}\right|_{u=u_\pm}=\left\{\frac{-2m(v_\mp)^2(1-v_\mp)(1-2a)}{b\sqrt{4a-4a^2-b^2}}\right\}^{1/2},\qquad\qquad
            u=u_\pm.
        \end{split}
\end{equation}
Here we have made use of the equality
$\sqrt{4b^2-4b^2v_\mp+v_\mp^2}=(1-2a)v_\mp$.\\

Coupling (\ref{after t mapping in case 3}) and (\ref{mapping in case
3}), the integral representation of $t_n(x, N+1)$ becomes
\begin{equation}\label{after the t and u mapping in case 3}
\begin{split}
t_n(x,N+1)=& \frac{(-1)^{n+1}}{2\pi
i}\frac{\Gamma(n+N+2)e^nb^{-n}e^{N\gamma}}{\Gamma(n+1)\Gamma(N-n+1)}\\
&\times
\int_{0}^{+\infty}\int_{\widehat{\gamma}}\frac{h(u,\tau)}{u}e^{N
\left[m\left(u-\frac{1}{u}\right)\right]}e^{N(b\ln
\tau-\tau)}\mathrm{d}u\mathrm{d}\tau,
\end{split}
\end{equation}
where
\begin{equation}\label{h function after the mapping in case 3}
    h(u,\tau)=\frac{u}{v(1-v)}\frac{\mathrm{d}t}{\mathrm{d}
\tau}\frac{\mathrm{d}v}{\mathrm{d} u},
\end{equation}
and the contour $\widehat{\gamma}$ starts from $-\infty$, encircles
the point $u=0$ in the counterclockwise direction and returns to
$-\infty$. For $l=0,1,2,\cdot\cdot\cdot$, we define recursively
\begin{eqnarray}
  h_l(u,\tau) &=& a_l(\tau)+\frac{b_l(\tau)}{u}+\left(1+\frac{1}{u^2}\right)g_l(u,\tau),\label{h_l in case 3}\\
  h_{l+1}(u,\tau)&=&-\frac{u}{m}\frac{\mathrm{d}}{\mathrm{d}
u}\left\{\frac{g_l(u,\tau)}{u}\right\},\label{hl 2 in case 3}
\end{eqnarray}
where $h_0(u,\tau)=h(u,\tau)$ given in (\ref{h function after the
mapping in case 3}). Furthermore, we expand $a_l(\tau)$ and
$b_l(\tau)$ at $\tau=0$, and write
\begin{equation}\label{expansion of coeffiencet an and bn in case 3}
       a_l(\tau)=\sum_{j=0}^{\infty} a_{l,j}\tau^j, \qquad       b_l(\tau)=\sum_{j=0}^{\infty}
       b_{l,j}\tau^j.
\end{equation}
It is easy to see that
\begin{equation}\label{al bl and hl}
    \begin{split}
    a_l(\tau)&=\frac{1}{2}\left[h_l(i,\tau)+h_l(-i,\tau)\right],\\
     b_l(\tau)&=\frac{i}{2}\left[h_l(i,\tau)-h_l(-i,\tau)\right].
    \end{split}
\end{equation}
From (\ref{after the t and u mapping in case 3}), (\ref{h_l in case
3}) and (\ref{hl 2 in case 3}), we have
\begin{align}\label{substitute in in case 3 temp}
  t_n(x,N+1)=& \frac{(-1)^{n+1}}{2\pi
i}\frac{\Gamma(n+N+2)e^nb^{-n}e^{N\gamma}}{\Gamma(n+1)\Gamma(N-n+1)} \notag\\
   &\times
\int_{0}^{+\infty}\int_{-\infty}^{\left(0^+\right)}\left[a_0(\tau)+\frac{b_0(\tau)}{u}+\left(1+\frac{1}{u^2}\right)g_0(u,\tau)\right]\frac{e^{N
m\left(u-\frac{1}{u}\right)}}{u}\tau ^n
e^{-N\tau}\mathrm{d}u\mathrm{d}\tau \\
=&\frac{(-1)^{n+1}\Gamma(n+N+2)e^nb^{-n}e^{N\gamma}}{\Gamma(N-n+1)N^{n+1}}\biggr[c_0J_0(2Nm)+d_0
J_1(2Nm)+\varepsilon_1^+\biggr],\notag
\end{align}
where
\begin{equation}\label{temp c0 and d0}
\begin{split}
  c_0 &=\frac{N^{n+1}}{\Gamma(n+1)}\int_0^\infty a_0(\tau)\tau^ne^{-N\tau}\mathrm{d}\tau\sim \sum_{m=0}^\infty a_{0,m}\frac{\Gamma(n+m+1)}{\Gamma(n+1)N^m}, \\
  d_0 & =\frac{N^{n+1}}{\Gamma(n+1)}\int_0^\infty b_0(\tau)\tau^ne^{-N\tau}\mathrm{d}\tau\sim \sum_{m=0}^\infty b_{0,m}\frac{\Gamma(n+m+1)}{\Gamma(n+1)N^m}
\end{split}
\end{equation}
and
\begin{equation}\label{epsilon 1 in case 3}
\begin{split}
  \varepsilon_1^+=\frac{N^{n+1}}{2\Gamma(n+1)\pi i}\int_{0}^{+\infty}\int_{-\infty}^{\left(0^+\right)}\left(1+\frac{1}{u^2}\right)\frac{g_0(u,\tau)}{u}e^{N m\left(u-\frac{1}{u}\right)}\tau ^n
e^{-N\tau}\mathrm{d}u\mathrm{d}\tau.
\end{split}
\end{equation}
In (\ref{substitute in in case 3 temp}), we have made use of the
integral representations of Bessel function and Gamma function
\begin{equation*}
    J_v(z)=\frac{1}{2\pi
    i}\int_{-\infty}^{\left(0^+\right)}u^{-v-1}\exp\left\{\frac{z}{2}\left(u-\frac{1}{u}\right)\right\}\mathrm{d}u
\end{equation*}
and
\begin{equation*}
    \Gamma(n+1)=N^{n+1}\int_0^{+\infty}e^{-N\tau}\tau^n\mathrm{d}\tau;
\end{equation*}
see \cite[(10.9.19) and (5.9.1)]{handbook}. Using (\ref{hl 2 in case
3}) and integration by parts, we can rewrite $\varepsilon_1^+$ in
(\ref{epsilon 1 in case 3}) as
\begin{equation}
\begin{split}
  \varepsilon_1^+=\frac{N^{n}}{2\Gamma(n+1)\pi i}\int_{0}^{+\infty}\int_{-\infty}^{\left(0^+\right)}\frac{h_1(u,\tau)}{u}e^{N
m\left(u-\frac{1}{u}\right)}\tau ^n
e^{-N\tau}\mathrm{d}u\mathrm{d}\tau.
\end{split}
\end{equation}
Repeating the procedure above, we obtain
\begin{equation}\label{result a>0 in case 3}
\begin{split}
  t_n(x,N+1) = \frac{(-1)^{n+1}\Gamma(n+N+2)e^n b^{-n}e^{N\gamma}}{\Gamma(N-n+1)N^{n+1}}\biggr[&J_0(2Nm)\sum_{l=0}^{p-1}\frac{c_l}{N^{l}}\\
   & \left. +J_1(2Nm)\sum_{l=0}^{p-1}\frac{d_l}{N^{l}}+\varepsilon_p^+\right],
\end{split}
\end{equation}
where
\begin{eqnarray}
  c_l &=&\frac{N^{n+1}}{\Gamma(n+1)}\int_0^\infty a_l(\tau)\tau^ne^{-N\tau}\mathrm{d}\tau\sim \sum_{m=0}^\infty a_{l,m}\frac{\Gamma(n+m+1)}{\Gamma(n+1)N^m} \label{cl when a>0 in case 3},\\
  d_l& =&\frac{N^{n+1}}{\Gamma(n+1)}\int_0^\infty b_l(\tau)\tau^ne^{-N\tau}\mathrm{d}\tau\sim \sum_{m=0}^\infty b_{l,m}\frac{\Gamma(n+m+1)}{\Gamma(n+1)N^m} \label{d_l when a>0 in case 3},\\
\varepsilon_p^+&= &\frac{N^{n+1-p}}{2\Gamma(n+1)\pi
i}\int_{0}^{+\infty}\int_{-\infty}^{\left(0^+\right)}\frac{h_p(u,\tau)}{u}e^{N
m\left(u-\frac{1}{u}\right)}\tau ^n
e^{-N\tau}\mathrm{d}u\mathrm{d}\tau\label{epsilon p in case 3}.
\end{eqnarray}
Since $h_0(u,\tau)$ involves ${\mathrm{d}v}/{\mathrm{d}u}$ and
${\mathrm{d}v}/{\mathrm{d}u}$ depends on the parameters $a$ and $b$
in (\ref{a and b}), the coefficients $c_l$ and $d_l$ in (\ref{cl
when a>0 in case 3}) and (\ref{d_l when a>0 in case 3}) also depend
on $a$ and $b$. For an estimate on these coefficients, see (\ref{cl
dl estimate in bessel}) below.\\

To facilitate the application of expansion (\ref{result a>0 in case
3}), we recall that the constants $\gamma$ and $m$ are explicitly
given in (\ref{gamma in the mapping case 3}) and (\ref{asymptotic
behaviour of m}), and note that the leading coefficient can be
(asymptotically) calculated by using (\ref{expansion of coeffiencet
an and bn in case 3}) and (\ref{al bl and hl}). Indeed, we have
\begin{equation*}
  c_0 \sim a_{0,0}=\frac{1}{2}\left[h_0(i,0)+h(-i,0)\right]
\end{equation*}
and
\begin{equation*}
  d_0 \sim b_{0,0}=\frac{i}{2}\left[h_0(i,0)-h(-i,0)\right]
\end{equation*}
as $b\rightarrow 0$ and $N\rightarrow \infty$. Furthermore, equation
(\ref{h function in use 2}) gives
\begin{equation}\label{h0 pm i and 0}
    \begin{split}
    h_0(\pm
    i,0)=\frac{-(1-a)v_\mp}{b}\sqrt{\frac{2(1-2a)m}{b\sqrt{4a-4a^2-b^2}(1-v_\mp)}}e^{\pm \Delta
    i},
    \end{split}
\end{equation}
where $\Delta=2\arctan\sqrt{\frac{a}{1-a}}-\sqrt{\frac{a}{1-a}}$.

\subsection{The mapping $v\rightarrow u$ in (\ref{mapping in case 3})}\label{sec 4}
In the case under discussion, $a/b^2\rightarrow \infty$ and
$x\rightarrow\infty$. Thus, since $0<a<\frac{1}{2}$, we have
$4a-4a^2-b^2>0$, and the saddle points in (\ref{saddle points of v})
are complex.

\begin{thm}\label{thm 2}
When $a\in(0,\frac{1}{2}]$, $b\rightarrow 0^+$ and $b^2/a\rightarrow
0$, the mapping $v\rightarrow u$ defined in (\ref{mapping in case
3}) is one-to-one and analytic for $v\in D_v$ in the $v$-plane and
$u\in D_u$ in the $u$-plane, where the image of the boundary of
$D_u$ in the $Z$-plane is given in (\ref{boundaries in the
figures}), and $D_v$ is the image of $D_u$ under the mapping
(\ref{mapping in case 3}).
\end{thm}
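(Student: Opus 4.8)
The plan is to factor the implicitly defined map $v\rightarrow u$ through the common phase value $Z:=m\!\left(u-\tfrac1u\right)=\widehat f(t_0(v),v)-\gamma$, so that it decomposes as $v\xrightarrow{\Phi}Z\xrightarrow{\Psi^{-1}}u$, where $\Phi(v)=\widehat f(t_0(v),v)-\gamma$ and $\Psi(u)=m\!\left(u-\tfrac1u\right)$; cf.\ (\ref{mapping in case 3}). If I can show that $\Phi$ maps a region $D_v$ conformally onto a domain $D_Z$ in the $Z$-plane, and that $\Psi$ maps $D_u$ conformally onto the \emph{same} $D_Z$, with the saddle correspondence (\ref{the relation between v+- and u+- case 3}) matching the critical values $\Phi(v_\pm)=\Psi(u_\mp)=\mp 2mi$, then the composition $u=\Psi^{-1}\circ\Phi$ is automatically one-to-one and analytic from $D_v$ onto $D_u$. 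The analyticity away from the saddles is immediate once $\Phi'\neq0$ and $\Psi'\neq0$ there, while at the saddles it comes from the matched second-order vanishing of the two phases (a Chester--Friedman--Ursell type square-root matching).

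First I would dispose of the elementary factor $\Psi(u)=m\!\left(u-\tfrac1u\right)$. Its only critical points are the saddles $u_\pm=\pm i$ of (\ref{saddle points of u in case 3}), with $\Psi(\pm i)=\pm 2mi$. Since $\Psi(u_1)=\Psi(u_2)$ for $u_1\neq u_2$ forces $u_1u_2=-1$, the map $\Psi$ is one-to-one on any fundamental domain of the involution $u\mapsto -1/u$; I would take $D_u$ to be such a domain having $u_\pm$ on its boundary, and verify by a direct Joukowski-type computation on $\partial D_u$ that $\Psi$ carries $\partial D_u$ onto the two-sided slit $[-2mi,2mi]$, hence $D_u$ onto $D_Z$, the $Z$-plane cut along that segment. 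This is precisely the boundary description recorded in (\ref{boundaries in the figures}).

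The main part is to prove that $\Phi(v)=\widehat f(t_0(v),v)-\gamma$ maps a suitable $D_v$ conformally onto the same slit plane $D_Z$. Because $t_0(v)$ of (\ref{saddle point of t in case 3}) is the $t$-saddle, one has $\Phi'(v)=\partial_v\widehat f(t_0(v),v)$, whose zeros inside $D_v$ must be shown to be exactly the two points $v_\pm$ of (\ref{saddle points of v}). Local conformality at those points follows from $\Phi(v)-\Phi(v_\pm)=\tfrac12\Phi''(v_\pm)(v-v_\pm)^2+\cdots$ together with the finiteness and non-vanishing of $\mathrm dv/\mathrm du$ at $u=u_\pm$ recorded in (\ref{partial v over partial u in case 3}); I would check that this value stays bounded away from $0$ and $\infty$ under $b\rightarrow 0^+$, $b^2/a\rightarrow 0$. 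For global univalence I would then use the argument principle: after locating $v_\pm$ via the regime $b^2/a\rightarrow0$ (both collapse toward $v=0$) and laying out $D_v$ so that it contains the relevant portion of the contour $\gamma_3$ while avoiding the two cuts in the $v$-plane (the one on $[1,\infty)$ and the bent cut through the origin joining $2b^2\pm 2b\sqrt{1-b^2}\,i$), I would verify that $\Phi$ carries $\partial D_v$ homeomorphically onto the boundary slit $\partial D_Z$; univalence of $\Phi$ onto $D_Z$ then follows, since a holomorphic map whose boundary values trace the Jordan boundary of $D_Z$ exactly once is one-to-one.

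The hard step is exactly this global control of $\Phi$: because $\widehat f(t_0(v),v)$ is the composite phase (\ref{f widehat rearranged}) built from the $t$-saddle $t_0(v)$, it carries several branch points in the $v$-plane, so tracking its steepest-descent/level structure, ruling out spurious critical points, and excluding boundary self-intersections must be done \emph{uniformly} in the degenerating limit $b\rightarrow0^+$, $b^2/a\rightarrow0$, where $v_\pm$ approach the singularity at $v=0$ and $m\sim b\arctan\sqrt{a/(1-a)}\rightarrow0$ rescales the whole map. I expect the bulk of the effort to lie in this boundary-correspondence analysis for $\Phi$; once it is established, composing with the explicit inverse $\Psi^{-1}$ delivers the asserted one-to-one analytic mapping $v\rightarrow u$ between $D_v$ and $D_u$.
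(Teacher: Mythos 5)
Your overall architecture---factoring the map through the intermediate variable $Z$, checking that the composite phase $\Phi(v)=\widehat f(t_0(v),v)-\gamma$ and the model phase $\Psi(u)=m\left(u-\frac{1}{u}\right)$ send corresponding boundaries to the same curves in the $Z$-plane, and then invoking a ``boundary traced once implies one-to-one'' theorem---is exactly the skeleton of the paper's proof, which introduces precisely this $Z$ in (\ref{big Z in proof in case 3}) and cites Theorem 1.2.2 of \cite{lecture notes} for the injectivity step. But your concrete identification of the domains is wrong, and this is where the proposal fails. The image region $D_Z$ is \emph{not} the plane slit along $[-2mi,2mi]$: by (\ref{boundaries in the figures}) it is a bounded, annulus-like region between an inner boundary of size $O(b)$ (the segments $\text{Re }Z=\pm bM$, $\text{Im }Z=\theta b\pi$) and an outer boundary of size $O(a)$ (the segments $\text{Re }Z=\pm aM$, $\text{Im }Z=a\pi$). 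This truncation is forced, not cosmetic: $\Phi$ has logarithmic branch points at $v=1$ (the cut $[1,\infty)$) and along the bent cut through $v=0$ joining $2b^2\pm 2b\sqrt{1-b^2}\,i$, so its range over any admissible $D_v$ can never be the full slit plane; moreover the inner and outer scales $|u|\sim m/b$ and $|u|\sim a/m$ of $C_I$ and $C_O$ in (\ref{u estimate on CI}) and (\ref{u estimate on CO}), which drive the entire error analysis of Section 5.3, come exactly from these truncated boundaries. Your claim that the slit ``is precisely the boundary description recorded in (\ref{boundaries in the figures})'' is false, so your Joukowski computation for $\Psi$ establishes a statement about different domains from those asserted in the theorem.

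There is also a structural problem with asking $\Phi$ itself to be a conformal bijection $D_v\rightarrow D_Z$: the points $v_\pm$ are critical points of $\Phi$, so this is impossible when they lie in the interior of $D_v$---and they must, since the theorem (and the later analyticity of $h_0(u,\tau)$ at $u=\pm i$, see (\ref{h function in use 2}) and (\ref{h function in saddlepoint})) requires the composite map to be analytic at $v=v_\mp$, i.e., requires $u=\pm i$ to be interior to $D_u$. Your fallback of placing $u_\pm$ on the boundary of a fundamental domain of $u\mapsto -1/u$ yields injectivity only away from the saddles and leaves injectivity and analyticity \emph{across} them unproved; the Chester--Friedman--Ursell matching you invoke is local and must still be glued consistently to the global branch. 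The paper's device, absent from your proposal, is to cut the upper half $v$-plane along the steepest descent path through $v_-$ into regions I and III (Figure \ref{w-proof}), verify the boundary correspondence for each simply connected piece separately (Figures \ref{z-proof-I} and \ref{z-proof-II}), apply the one-to-one theorem piecewise, and then observe that $v=v_\mp$ are removable singularities of the glued map because their images $u=\pm i$ remain bounded. Note finally that the true $D_Z$ is doubly connected, so the argument-principle step cannot legitimately be applied to it in one stroke anyway; the subdivision along the steepest descent paths is what makes that step valid.
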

\begin{proof}
As in the cases of Charlier polynomials \cite{Rui} and Meixner
polynomials \cite{xiaojin}, we introduce an intermediate variable
$Z$ defined by
\begin{equation}\label{big Z in proof in case 3}
\begin{split}
   b\ln(1-t_0(v))+&(1-b)\ln t_0(v)-a\ln (1-v)-b\ln(\frac{v}{t_0(v)+v-1})-\gamma\\
    =&Z=m\left(u-\frac{1}{u}\right),
    \end{split}
\end{equation}
where $t_0(v)$ is the relevant saddle point given in (\ref{saddle
point of t in case 3}). \\

\begin{figure}[!h]
\centering
  % Requires \usepackage{graphicx}
  \includegraphics[scale=0.7]{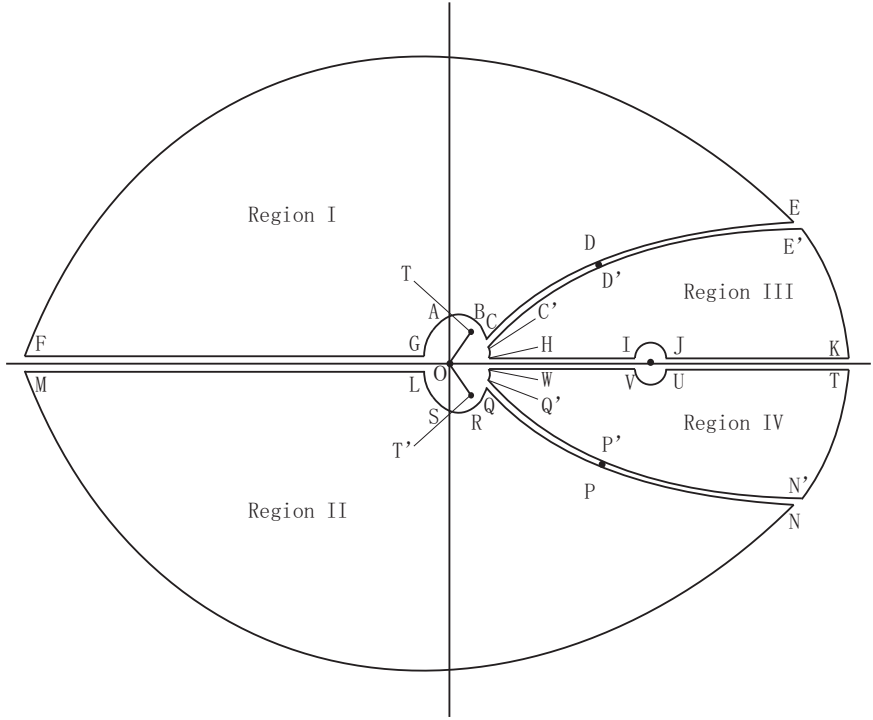}\\
  \caption{The upper half of the $v$-plane.}\label{w-proof}
\end{figure}

\begin{figure}[!h]
\centering
  % Requires \usepackage{graphicx}
  \includegraphics[scale=0.6]{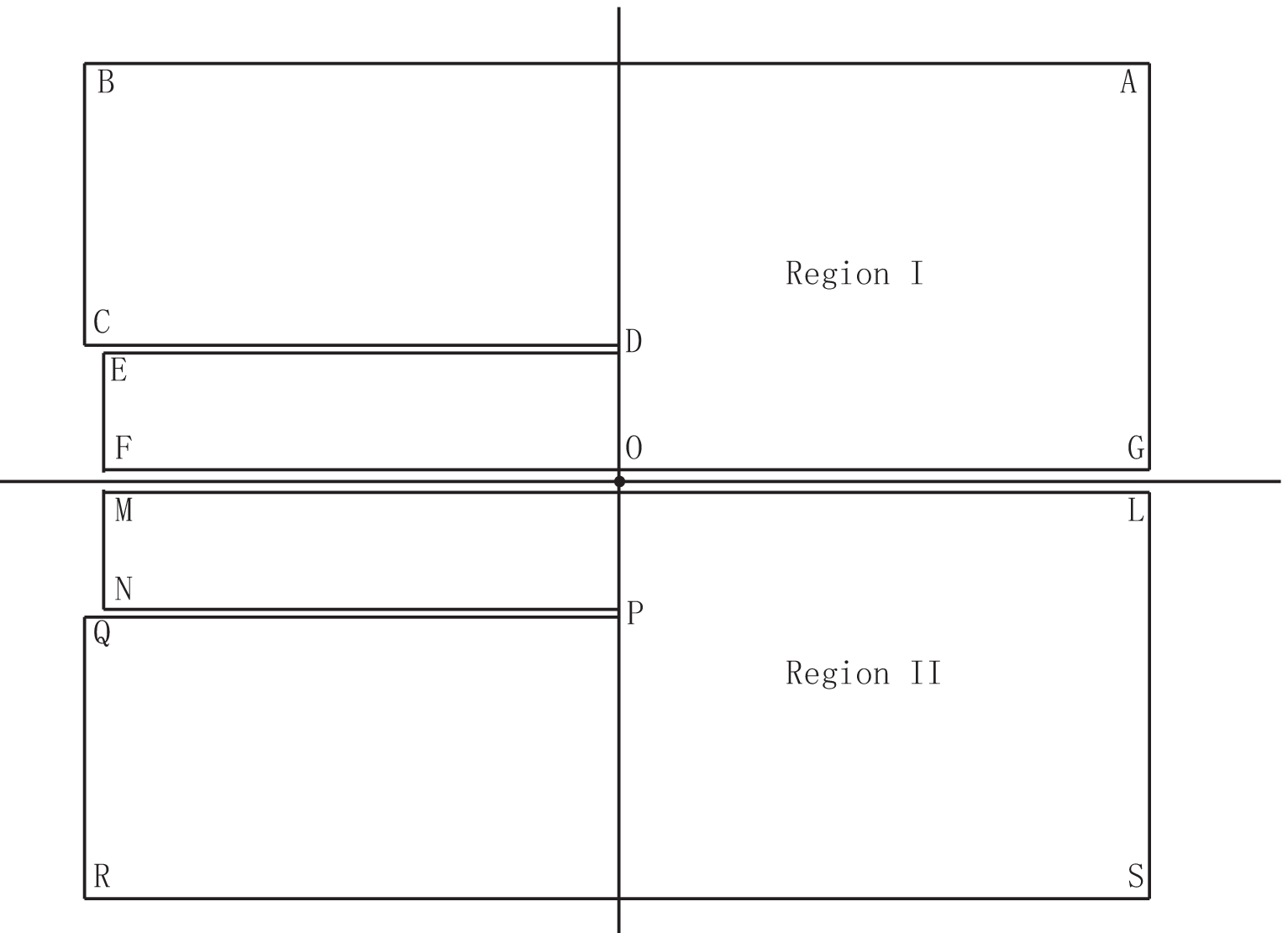}\\
  \caption{The image of Region I in the $Z$-plane.}\label{z-proof-I}
\end{figure}

\begin{figure}[!h]
\centering
  % Requires \usepackage{graphicx}
  \includegraphics[scale=0.6,bb=0 0 420 340]{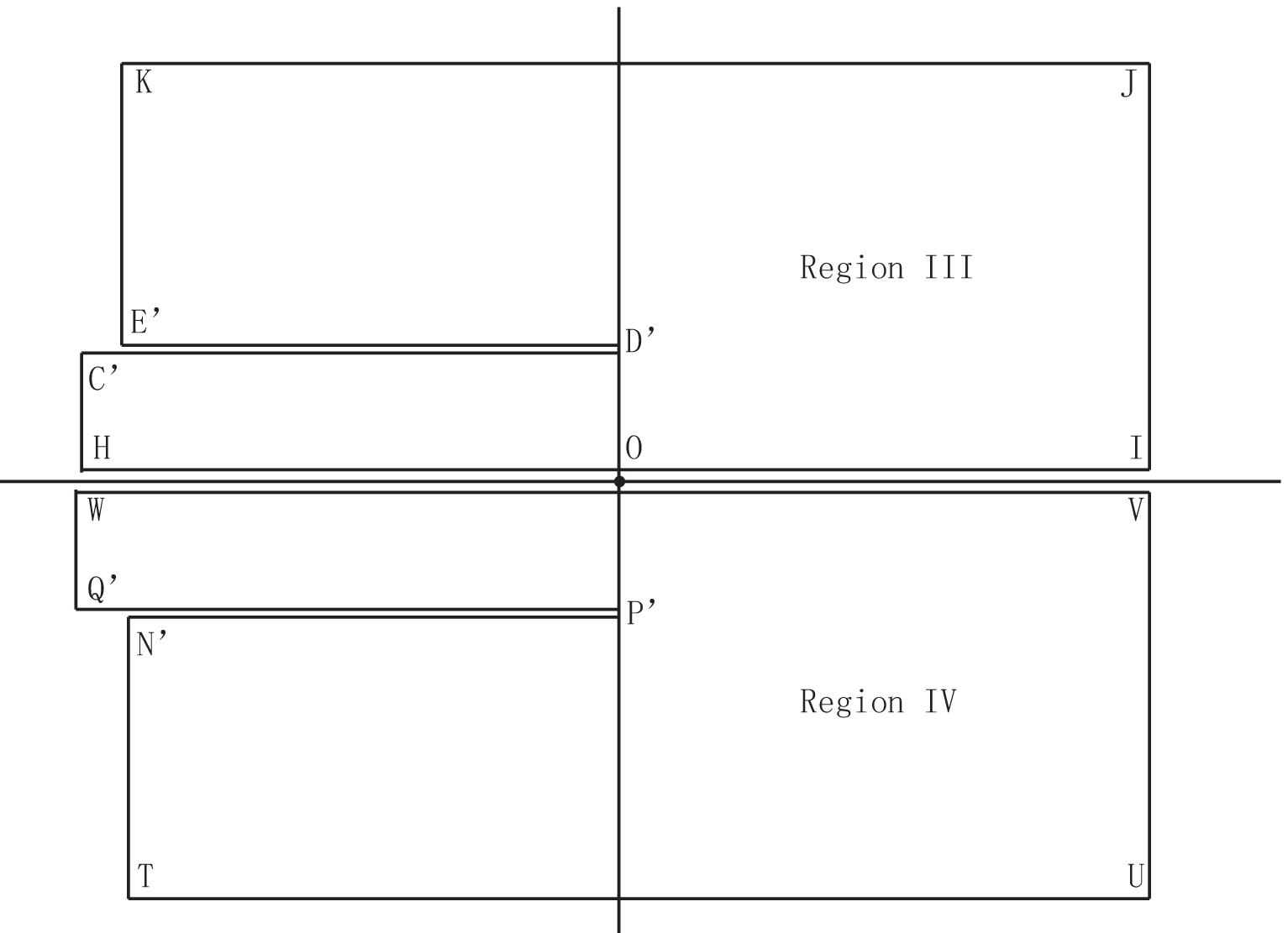}\\
  \caption{The image of Region II in the $Z$-plane.}\label{z-proof-II}
\end{figure}

\begin{figure}[!h]
\centering
  % Requires \usepackage{graphicx}
  \includegraphics[scale=0.6]{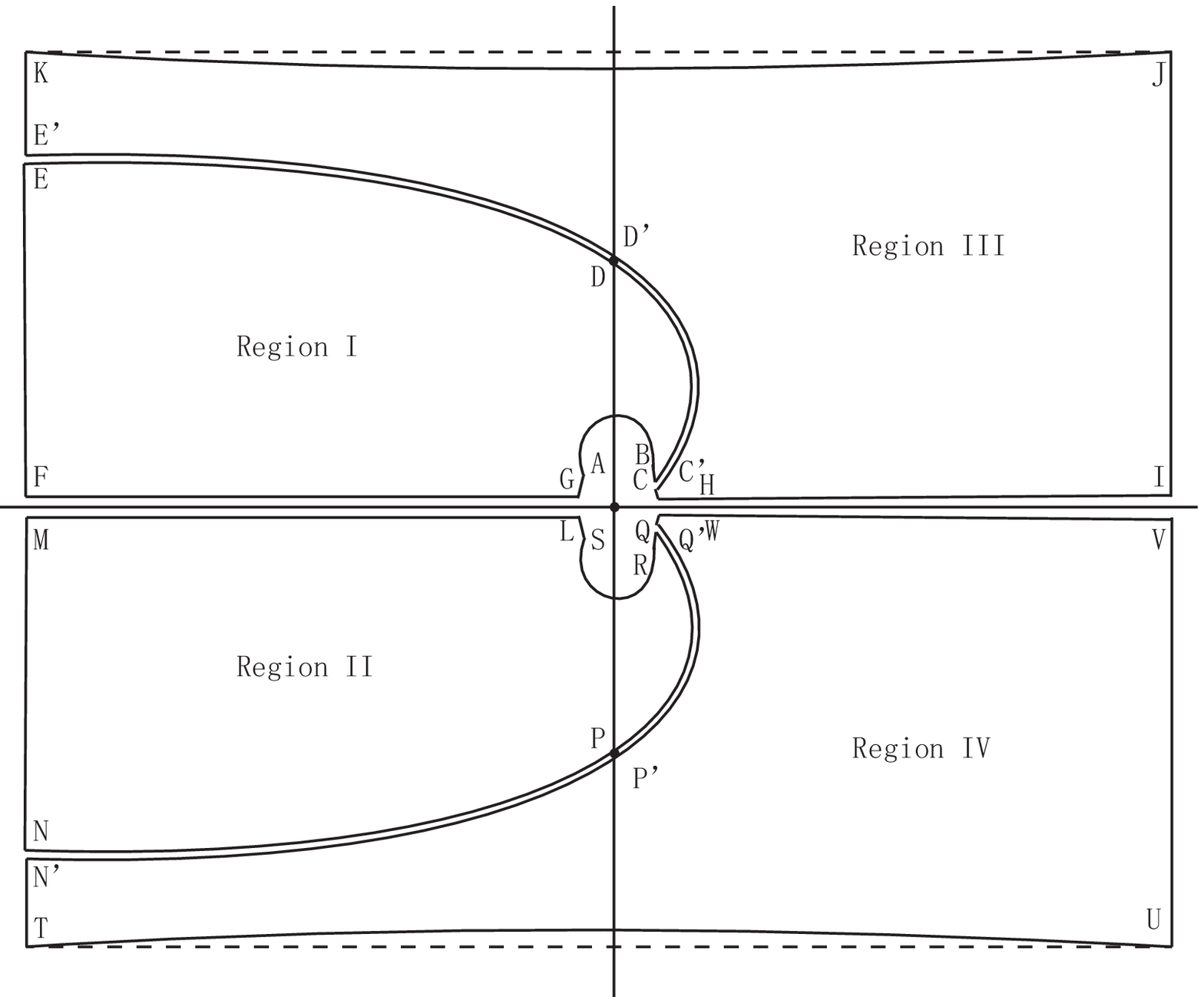}\\
  \caption{The upper half of the $u$-plane.}\label{u-proof}
\end{figure}

We consider the upper half of the $v$-plane. Since the functions
$\widehat{f}(t_0(v),v)-\gamma$ and $g(u)$ are both symmetric with
respect to the real line, the case for the lower half of the
$v$-plane can be handled in the same manner. To avoid
multi-valuedness around the saddle point, we divide the upper half
of the $v$-plane into two parts by using the steepest descent path
through $v_-$, i.e., the point denoted by $D$; see Figure
\ref{w-proof}. Call the two parts region I and region III. Note that
there are two branch cuts: one along the infinite interval
$[1,\infty)$, and the other along the line segment joining $v=0$ to
$v=2b^2+i 2b\sqrt{1-b^2}$, which was introduced in (\ref{saddle
point of t in case 3}). We denoted the point $2b^2+i 2b\sqrt{1-b^2}$
by the letter $T$ in Figure 4.\\

As $v$ traverses along the boundary $ABCDEFGA$ of region I, the
image point $Z$ traverses along the corresponding boundary of a
region in the $Z$-plane; see Figure \ref{z-proof-I}. In Figure
\ref{u-proof}, we draw the boundary of the region, corresponding to
region I, in the $u$-plane. The image point $Z=\psi(u)+\gamma$
traverses along the boundary of the same region shown in Figure
\ref{z-proof-I}. Note that we have used an arc $GABC$ to avoid the
cut $OT$. The boundary curves $EF$ and $GABC$ in the $v$-plane are
rather arbitrary; for convenience, we choose them to be the ones
whose images
in the $Z$-plane are as indicated in Figure 5.\\

From (\ref{mapping in case 3}), it is readily seen that the mapping
$v\rightarrow u$ is the composite function of $g^{-1}$ and
$\widehat{f}(t_0(v),v)-\gamma$. We have just verified that this
mapping is one-to-one on the boundary of region I. By the same
argument, one can prove that this mapping is also one-to-one on the
boundary of region III. For the image of region III in the
intermediate $Z$-plane, see Figure \ref{z-proof-II}. In the
$u$-plane, we let $D_u$ denote the union of the regions I to IV,
with outer boundary $FEE'KJIVUTN'NM$, and inner boundary
$GABCC'HWQ'QRSL$; see Figure \ref{u-proof}. Under the mapping
$Z=g(u)$, we express the images of parts of the outer and inner
boundaries of $D_u$ in the $Z$-plane as follows:
\begin{equation}\label{boundaries in the figures}
\begin{split}
% \nonumber to remove numbering (before each equation)
  GA:\qquad & \text{Re }Z=bM  \hspace{1.5cm} \qquad 0\leq \text{Im }Z\leq \theta b \pi; \\
  AB:\qquad  & -bM\leq \text{Re }Z\leq bM \hspace{1.2cm} \text{Im }Z= \theta b\pi; \\
  BC\text{ and } C'H:\qquad  & \text{Re }Z=-bM \hspace{1.9cm} 0\leq \text{Im }Z\leq \theta b\pi; \\
  IJ:\qquad  & \text{Re }Z=aM \hspace{2.22cm} 0\leq \text{Im }Z\leq a\pi; \\
  JK:\qquad  & -aM\leq \text{Re }Z\leq aM \hspace{1.2cm}  \text{Im }Z= a\pi; \\
  KF:\qquad & \text{Re }Z=-aM  \hspace{2cm} 0\leq \text{Im }Z\leq
  a\pi,
  \end{split}
\end{equation}
where $M>0$ is any fixed constant, and $\theta$ is a generic symbol
for a constant in
$(0,1)$. Furthermore, we denote by $D_v$ the corresponding region of $D_u$ in the $v$-plane.\\

By Theorem 1.2.2 of \cite[p.12]{lecture notes}, the mapping
$v\rightarrow u$ is one-to-one in the interior of both regions I and
III in the upper half of the $v$-plane. As explained earlier, the
one-to-one property of this mapping in the lower half of the
$v$-plane can be established by using the symmetry of the functions
with respect to the real axis. Note that the only possible singular
points of the mapping $v\rightarrow u$ in $D_v$ are at $v=v_\mp$.
Since the images of these points in the $u$-plane (i.e., $u=\pm i$)
are bounded, the mapping is in fact one-to-one and analytic in
$D_v$.

%Here, we only present the Figures of upper half $w$-plane,
%corresponding $z$-plane and lower half of
%$u$-plane in the following in Fig. \ref{w-proof2}, Fig. \ref{z-proof-I2} Fig. \ref{z-proof-II2} and Fig. \ref{u-proof2}.\\
\end{proof}

\subsection{Analyticity of $h_0(u,\tau)$}
We now investigate the function $h_0(u,\tau)=h(u,\tau)$ given in
(\ref{h function after the mapping in case 3}). By (\ref{dt over
dtau b-0}) and (\ref{partial v over partial u in case 3}), we have
\begin{equation}\label{h function in use}
h_0(u,\tau)=\frac{m(\tau-b)
t(1-t)(t+v-1)(u-i)(u+i)\left[(1-2a)v+\sqrt{4b^2-4b^2
v+v^2}\right]}{2a(1-a)\tau(1+b)(t-t^+_0(v))(t-t^-_0(v))u(v-v_+)(v-v_-)}
\end{equation}
when $\tau\neq b$ and $u\neq \pm i$, and
\begin{equation}
\begin{split}
\label{ }
h_0(u,\tau)=&\frac{m(u-i)(u+i)\left[(1-2a)v+\sqrt{4b^2-4b^2
v+v^2}\right]}{2a(1-a)u(v-v_+)(v-v_-)}\\
&\times\sqrt{\frac{t_0(v)(1-t_0(v))(t_0(v)+v-1)}{b\sqrt{4b^2-4b^2
v+v^2}}}
\end{split}
\end{equation}
when $\tau= b$ and $u\neq \pm i$, where $u\in D_u$ and $D_u$ is
defined in the previous subsection. Note that $u=0$ is excluded from
$D_u$. It is easy to see that $(t-1)/\tau$ is bounded by a constant,
independent of $b$ and $a$, as $\tau\rightarrow 0$ for $v\in D_v$
and $u\in D_u$. Therefore, $h_0(u,\tau)$ is analytic in $\tau$ for
$\tau$ in the neighbourhood of $[0,\infty$). Moreover, $h_0(u,\tau)$
is also analytic in $u$ for $u\in D_u$, since the only singularities
$u=\pm i$ in $D_u$ are both removable; indeed, we have
\begin{equation}
\label{h function in use 2} h_0(u,\tau)=\frac{(\tau-b)
t(1-t)(t+v_\mp-1)}{\tau(1+b)(t-t^+_0(v_\mp))(t-t^-_0(v_\mp))}\sqrt{\frac{2(1-2a)m}{(1-v_\mp)b\sqrt{4a-4a^2-b^2}}}
\end{equation}
when $\tau\neq b$ and $u= \pm i$, and by the equation following
(\ref{partial v over partial u in case 3}) we also have
\begin{equation}
\label{h function in saddlepoint}
h_0(u,\tau)=\sqrt{\frac{2mt_\mp(1-t_\mp)(t_\mp+v_\mp-1)}{b^2v_\mp(1-v_\mp)\sqrt{4a-4a^2-b^2}}}
\end{equation}
when $\tau= b$ and $u= \pm i$. To reach (\ref{h function in
saddlepoint}), we have written $u_\pm$ as $u_\pm=\sqrt{u_\pm^2}$ and
move $u_\pm^2$ inside the square root in $\mathrm{d}v/\mathrm{d}u$
at $u=u_\pm$; see
(\ref{partial v over partial u in case 3}).\\

For the analysis to be used in the next subsection, we now give an
estimate for $h_0(u,0)$ when $u$ lies on the boundaries of $D_u$,
i.e, the outer boundary $FEE'KJIVUTN'NM$ and the inner boundary
$GABCC'HWQ'QRSL$ shown in Figure \ref{u-proof}. For convenience, let
us denote the outer boundary by $C_O$ and the inner boundary by
$C_I$.\\

First, we show that for $u$ on the inner boundary $C_I$,
\begin{equation}\label{u estimate on CI}
    C_1 \frac{m}{b}<|u|<C_2\frac{m}{b},
\end{equation}
where $C_1<C_2$ and $C$, $C_1$, $C_2$ are used, here and thereafter,
as generic symbols for constants independent of $u$, $v$, $m$, $a$
and $b$. Recall from (\ref{asymptotic behaviour of m}) that $m\sim
b\arctan \sqrt{\frac{a}{1-a}}$ as $b\rightarrow 0$. To prove (\ref{u
estimate on CI}), we take the part $AB$ of the inner boundary $C_I$
as an illustration. Using (\ref{boundaries in the figures}), we
obtain
\begin{equation}
\label{temp for estimate of U}
\left|m\left(1-\frac{1}{|u|^2}\right)\text{Re }u\right|\leq
bM\quad\text{and}\quad m\left(1+\frac{1}{|u|^2}\right)\text{Im
}u=\theta b \pi.
\end{equation}
Since $\left|\text{Im }u\right|\leq \left|u\right|$, from the
equality in (\ref{temp for estimate of U}) we have
$|u|^2-\frac{\theta b\pi}{m}|u|+1\geq 0$. Therefore,
\begin{equation}
\label{temp for estimate of U 2} |u|\leq \frac{\theta b\pi}{2
m}-\sqrt{\left(\frac{\theta b\pi}{2m}\right)^2-1}\leq
\frac{Cm}{b\pi}.
\end{equation}
Rewriting (\ref{temp for estimate of U}) gives
\begin{equation}
\label{ } \frac{m}{|u|^2}\left|\text{Re }u\right|\leq
bM+m\left|\text{Re }u\right|\quad\text{and}\quad
\frac{m}{|u|^2}\left|\text{Im }u\right|\leq \theta
b\pi+m\left|\text{Im }u\right|,
\end{equation}
which lead to $\frac{m}{|u|}\leq b M+\theta b\pi+m|u|$. Substituting
(\ref{temp for estimate of U 2}) into the right-hand side of the
last inequality, and combining the resulting inequality with
(\ref{temp for estimate of U 2}), we obtain (\ref{u estimate on CI})
immediately. Similarly, for $u$ on the outer boundary $C_O$, we have
the estimates
\begin{equation}\label{u estimate on CO}
    C_1 \frac{a}{m}<|u|<C_2\frac{a}{m}.
\end{equation}

Hence, it follows from (\ref{u estimate on CI}), (\ref{u estimate on
CO}) and (\ref{h function in use}) that
\begin{equation}\label{estimate of h on CI and CO}
    |h(u,0)|\leq C\qquad\text{ for }u\in C_I\cup C_O.
\end{equation}

\subsection{Error bounds for the remainder}
To prove the asymptotic nature of the expansion in (\ref{result a>0
in case 3}), we need to give precise estimates for its coefficients
$c_l$, $d_l$ in (\ref{cl when a>0 in case 3}) and (\ref{d_l when a>0
in case 3}), and the error term $\varepsilon_p^+$ given in
(\ref{epsilon p in case 3}), since the derivative
$\mathrm{d}v/\mathrm{d}u$ in (\ref{partial v over partial u in case
3}) may blow up as $v_\pm$ approach 0, just like the case in Section
4. Because the coefficients $c_l$ and $d_l$ are related to
$a_p(\tau)$ and $b_p(\tau)$ by (\ref{expansion of coeffiencet an and
bn in case 3}), (\ref{cl when a>0 in case 3}) and (\ref{d_l when a>0
in case 3}), let us first estimate $a_p(\tau)$ and $b_p(\tau)$. To
this end, we define recursively
\begin{equation*}
    A_0(u)=\frac{u}{1+u^2},\qquad B_0(u)=-\frac{1}{1+u^2},
\end{equation*}
and
\begin{eqnarray*}
% \nonumber to remove numbering (before each equation)
  A_{p+1}(u) &=& \frac{1}{m}\left(1+\frac{1}{u^2}\right)^{-1}\left\{\frac{1}{u}+\frac{\mathrm{d}}{\mathrm{d}u}\right\}A_p(u),\\
  B_{p+1}(u) &=& \frac{1}{m}\left(1+\frac{1}{u^2}\right)^{-1}\left\{\frac{1}{u}+\frac{\mathrm{d}}{\mathrm{d}u}\right\}B_p(u)
\end{eqnarray*}
for $p=0,1,2,...$. By induction, it can be shown that
\begin{equation}\label{Ak and Bk}
% \nonumber to remove numbering (before each equation)
\begin{split}
  A_{p}(u) &= \left(\frac{1}{2m}\right)^p\left(\frac{1}{u}\right)^{p+1}\left(\frac{u^2}{1+u^2}\right)^{p+1}\sum_{l=0}^p c_{p,l}\left(\frac{u^2}{1+u^2}\right)^{l},\\
  B_{p}(u) &= \left(\frac{1}{2m}\right)^p\left(\frac{1}{u}\right)^{p+2}\left(\frac{u^2}{1+u^2}\right)^{p+1}\sum_{l=0}^p d_{p,l}\left(\frac{u^2}{1+u^2}\right)^{l}
  \end{split}
\end{equation}
for $k=0,1,2,...$, where $c_{k,l}$ and $d_{k,l}$ are constants
independent of $m$ and $u$ (but dependent of $\tau$); see
\cite{zhao}. Furthermore, by (\ref{al bl and hl}), (\ref{h_l in case
3}) and (\ref{hl 2 in case 3}), it can be proved that
\begin{equation}\label{al integral}
    a_p(\tau)=\frac{1}{2\pi i}\int_{C_O\bigcup C_I}h_0(u,\tau)A_p(u)\mathrm{d}u
\end{equation}
and
\begin{equation}\label{bl integral}
    b_p(\tau)=\frac{1}{2\pi i}\int_{C_O\bigcup
    C_I}h_0(u,\tau)B_p(u)\mathrm{d}u;
\end{equation}
see also \cite{zhao}. Define $q:=\min\{a,b\}$; since in this case
both $n$ and $x\rightarrow \infty$, and $b^2/a\rightarrow 0$, we
have $qN\rightarrow\infty$, $m/q=O(b/\sqrt{a})\rightarrow 0$ if
$a\leq b$, and $m/q\sim \arctan
\sqrt{a/(1-a)}$ if $a> b$.\\

Using (\ref{Ak and Bk}), (\ref{u estimate on CI}) and (\ref{u
estimate on CO}), it is easily verified that
\begin{equation}\label{inner boundary Ak and Bk}
    \left|A_p(u)\right|\leq \frac{C m
    }{b^{p+1}},\qquad\left|B_p(u)\right|\leq \frac{C
    }{b^{p}}
\end{equation}
for $u$ on the inner boundary $C_I$, and
\begin{equation}\label{outer boundary Ak and Bk}
    \left|A_p(u)\right|\leq \frac{C m
    }{a^{p+1}},\qquad\left|B_p(u)\right|\leq \frac{C m^2
    }{a^{p+2}}
\end{equation}
for $u$ on the outer boundary $C_O$, where $C$ is used again as a
generic symbol for constants independent of $u$, $a$ and $b$. Also
we have the estimates
\begin{equation}\label{length of the boundaray}
\int_{C_I}\left|\mathrm{d}s\right|\leq \frac{C
m}{b}\quad\text{and}\quad \int_{C_O}\left|\mathrm{d}s\right|\leq
\frac{C a}{m}.
\end{equation}
To see this, we take one part of $C_I$, namely, $AB$ as an example;
see (\ref{big Z in proof in case 3}) and (\ref{boundaries in the
figures}), where $Z$ in  (\ref{big Z in proof in case 3}) satisfies
$\text{Im } Z=\theta b\pi$. Using the second equality in
(\ref{boundaries in the figures}), and in polar coordinates, $u=r
e^{i\beta}$, the last equation is equivalent to
\begin{equation}
\label{ } \frac{(r^2+1)}{r}\sin \beta=\frac{\theta b\pi }{m}.
\end{equation}
Thus
\begin{equation}
\label{ } \mathrm{d} r=\frac{(1+r^2)^2 m}{(1-r^2)\theta b\pi}\cos
\beta \mathrm{d}\beta.
\end{equation}
Since in this case $r< 1$, we have $\left|\mathrm{d}r\right|\leq
\frac{Cm}{ b}\left|\mathrm{d}\beta\right|$. By (\ref{u estimate on
CI}), $|r|\leq C_2m/b$ and
\begin{equation}
\label{ }
\left|\mathrm{d}s\right|=\sqrt{(\mathrm{d}r)^2+r^2(\mathrm{d}\beta)^2}\leq
\frac{C m}{b}\left|\mathrm{d}\beta\right|,
\end{equation}
from which the first inequality in (\ref{length of the boundaray})
follows. The second inequality can be established in a similar
manner.\\

Since $h_0(u,0)\neq 0$ and $h_0(u,\tau)$ is analytic in $\tau$, we
have $|h_0(u,\tau)|\leq C |h_0(u,0)|$ for some constant $C>0$ and
for $\tau$ in the neighbourhood of the origin. By a combination of
(\ref{al integral}), (\ref{bl integral}), (\ref{inner boundary Ak
and Bk}), (\ref{outer boundary Ak and Bk}), (\ref{length of the
boundaray}) and (\ref{estimate of h on CI and CO}), we obtain
\begin{equation}\label{al and bl estimate}
    |a_p(\tau)|\leq \frac{C}{q^{p}}\quad
    \text{and}\quad |b_p(\tau)|\leq \frac{C}{q^{p}}
\end{equation}
for $\tau$ near $\tau=0$. Furthermore, by (\ref{expansion of
coeffiencet an and bn in case 3}), (\ref{cl when a>0 in case 3}),
(\ref{d_l when a>0 in case 3}) and (\ref{al and bl estimate}), we
have
\begin{equation}\label{cl dl estimate in bessel}
\left|c_p\right|\leq
    \frac{C}{q^{p}}\quad
    \text{and}\quad \left|d_p\right|\leq
    \frac{C}{q^{p}}.
\end{equation}

To estimate the remainder in (\ref{epsilon p in case 3}), we split
the loop contour into two parts; see Figure \ref{D_v-in-proof}. The
bounded part of the contour, denoted by $\Gamma_1$, is contained in
a subdomain $\widetilde{D}_u$ of $D_u$, which has a distance $c_1
a/m$ from the outer boundary of $D_u$ (i.e., $C_O$), and has a
distance $c_2 m /b$ from the inner boundary of $D_u$ (i.e., $C_I$),
$c_1$ and $c_2$ being two constants independent of $u$, $a$ and $b$.
Note that $c_1a/m$ may become large, whereas $c_2m/b$ may approach
zero. The unbounded part of the contour, denoted by $\Gamma_2$, is
the rest of the loop outside the subdomain $\widetilde{D}_u$. Put
\begin{eqnarray}
\varepsilon_{p,1}^+&= &\frac{N^{n+1-p}}{2\Gamma(n+1)\pi
i}\int_{0}^{+\infty}\int_{\Gamma_1}\frac{h_p(u,\tau)}{u}e^{N
m\left(u-\frac{1}{u}\right)}\tau ^n
e^{-N\tau}\mathrm{d}u\mathrm{d}\tau,\label{error term in bessel function in proof} \\
\varepsilon_{p,2}^+&= &\frac{N^{n+1-p}}{2\Gamma(n+1)\pi
i}\int_{0}^{+\infty}\int_{\Gamma_2}\frac{h_p(u,\tau)}{u}e^{N
m\left(u-\frac{1}{u}\right)}\tau ^n
e^{-N\tau}\mathrm{d}u\mathrm{d}\tau.
\end{eqnarray}
\begin{figure}[!h]
\centering
  % Requires \usepackage{graphicx}
  \includegraphics[scale=0.55,bb=0 0 488 410]{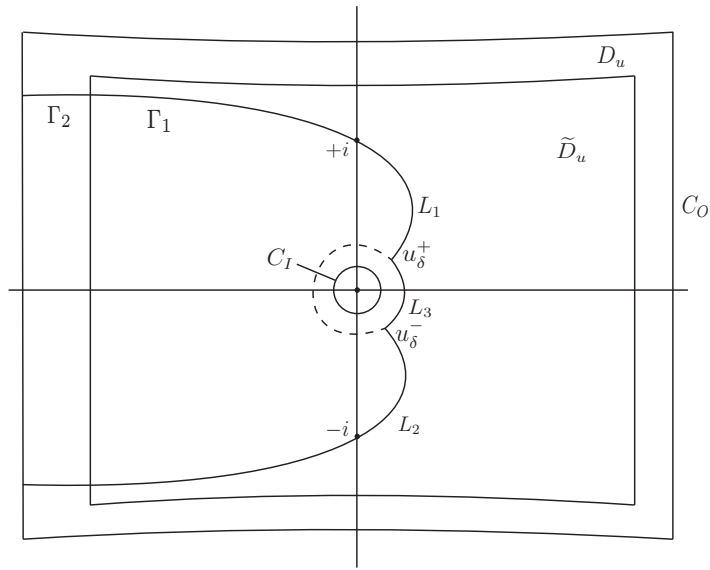}\\
  \caption{The contour in $u$-plane.}\label{D_v-in-proof}
\end{figure}\\
As in \cite{Olde Daalhuis} and \cite{zhao}, it can be shown that
$\varepsilon_{p,2}^+$ is exponentially small in comparison with
$\varepsilon_{p,1}^+$. To estimate $\varepsilon_{p,1}^+$, we also
follow \cite{Olde Daalhuis} and \cite{zhao} by first deriving an
integral representation for $h_k(u,\tau)$. Define recursively
\begin{equation}\label{Q in integral representation}
    \begin{split}
        Q_0(u,w)&=\frac{1}{u-w},\\
        Q_{p+1}(u,w)&=\frac{1}{m}\left(1+\frac{1}{u^2}\right)^{-1}\left\{\frac{1}{u}+\frac{\mathrm{d}}{\mathrm{d}u}\right\}Q_{p}(u,w)
    \end{split}
\end{equation}
for $p=0,1,2,\cdots$. Using induction, one can easily verify that
\begin{equation}\label{Qk expression}
    Q_{p}(u,w)=\frac{1}{(2m)^p}\sum_{i=0}^{p-1}\sum_{j=0}^{p-i}q_{p,i,j}\left(\frac{1}{1+u^2}\right)^{p+i}u^{2p+i-j}\left(\frac{1}{u-w}\right)^{p+1-i-j},
\end{equation}
where $q_{p,i,j}$ are some constants depending only on $i$, $j$ and
$p$. Similar to (\ref{IR of h-k}), for $w\in \widetilde{D}_u$ we
have from (\ref{h_l in case 3}), (\ref{hl 2 in case 3}) and (\ref{Q
in integral representation}) the integral representation
\begin{equation}\label{hk(w)}
    h_p(w,\tau)=\frac{1}{2\pi i}\int_{C_I\cup
    C_O}h_0(u,\tau)Q_p(u,w)\mathrm{d}u.
\end{equation}
From (\ref{Qk expression}), it is easy to see that
\begin{equation}\label{estimate of Q}
\begin{split}
% \nonumber to remove numbering (before each equation)
  |Q_p(u, w)| &\leq  C \frac{m}{a^{p+1}}\quad \text{ for }u\in C_O\text{ and }w\in \widetilde{D}_u, \\
  |Q_p(u, w)| &\leq C \frac{1}{m b^{p-1}}\quad \text{ for }u\in C_I\text{ and }w\in
  \widetilde{D}_u.
\end{split}
\end{equation}
Coupling the above inequalities with (\ref{length of the boundaray})
and (\ref{hk(w)}), we obtain
\begin{equation}\label{h_k estimate}
    |h_p(w,\tau)|\leq \frac{C}{q^{p}}
\end{equation}
for $w\in \tilde{D}_u$ and $\tau$ in the neighbourhood of $\tau=0$.\\

If $Nm$ is bounded, one can easily show from (\ref{error term in
bessel function in proof}) and (\ref{h_k estimate}) that
\begin{equation}\label{I_for Nm bounded}
    |\varepsilon_{p,1}^+|=O\left(\frac{1}{q^{p}N^p}\right).
\end{equation}

If $Nm\rightarrow \infty$ we divide the integration path $\Gamma_1$
into three pieces: the steepest descent path $L_1$ through $u=u_+=+
i$ in the upper half of the $u$-plane, the steepest descent path
$L_2$ through $u=u_-=-i$ in the lower half of the $u$-plane, and the
circular arc $|u|=1-\delta_0$, denoted by $L_3$, which joins the
steepest descent paths $L_1$ and $L_2$ at $u=u_\delta^+$ and
$u=u_\delta^-$, respectively; see Figure \ref{D_v-in-proof}. Hence,
\begin{equation}\label{}
\varepsilon_{p,1}^+=I_1+I_2+I_3,
\end{equation}
where $I_i$, $i=1,2,3$, denotes the integral over the subcontour
$L_i$. Applying the steepest descent method \cite[p.84]{R.Wong's
book}, and using (\ref{h_k estimate}), it can be easily verified
that
\begin{equation}\label{I_1,2}
    |I_{1,2}|=O\left(\frac{1}{q^{p}N^p}\sqrt{\frac{1}{Nm}}\right).
\end{equation}
For $I_3$, since
\begin{equation*}
\begin{split}
\text{Re}\left(u-\frac{1}{u}\right)&=-\frac{(2-\delta)\delta}{(1-\delta)^2}\text{Re
}u\leq-\frac{(2-\delta)\delta}{(1-\delta)^2}\text{Re
}u_\delta^\pm\\
&=\left.\text{Re}\left(u-\frac{1}{u}\right)\right|_{u=u_\delta^\pm}<\text{Re}\left.\left(u-\frac{1}{u}\right)\right|_{u=u_\pm},
\end{split}
\end{equation*}
$I_2$ is exponentially small in comparison with $I_{1,2}$.
Therefore,
\begin{equation}\label{epsilon p 1 estimate}
   |\varepsilon_{p,1}^+|=O\left(\frac{1}{q^{p}N^p}\sqrt{\frac{1}{Nm}}\right).
\end{equation}
It is well-known that the Bessel function $ J_\alpha(t)$ is bounded
when $t$ is bounded, and that as $t\rightarrow\infty$,
\begin{equation}\label{bessel asymptotic}
    J_\alpha(t)\sim\sqrt{\frac{2}{\pi
    t}}\cos\left(t-\frac{1}{2}\alpha\pi-\frac{1}{4}\pi\right).
\end{equation}
Coupling the estimates (\ref{I_for Nm bounded}) and (\ref{epsilon p
1 estimate}), and together with (\ref{bessel asymptotic}) and
(\ref{cl dl estimate in bessel}), we can find a constant $M$
independent of $a$ and $b$ such that
\begin{equation}\label{remainder estimate}
|\varepsilon_{p}^+|\leq
\frac{M}{qN}\left(|J_0(2Nm)|\frac{|c_{p-1}|}{N^{p-1}}+|J_1(2Nm)|\frac{|d_{p-1}|}{N^{p-1}}\right),
\end{equation}
which establishes the asymptotic nature of the expansion in
(\ref{result a>0 in case 3}), given that $qN\rightarrow \infty$. In
conclusion, in the case $n^2/xN\rightarrow 0$ and $x\rightarrow
\infty$, (\ref{result a>0 in case 3}) is an asymptotic expansion.

\section{REMAINING CASES}
We are now left with only two easy cases to consider.
\subsection{$a>0$, $a/b^2$ large and $x$ bounded}
In this case, the series in (\ref{relation between DCP and Hahn}) is
itself an asymptotic expansion, since
\begin{equation*}
    \varphi_k(N):=\frac{(-n)_k(-x)_k(n+1)_k}{(-N+1)_k
    k!k!},\qquad\qquad k=0,1,\cdots,n,
\end{equation*}
is an asymptotic sequence when $xN/n^2\rightarrow \infty$; see Table
1 and also \cite[p.10]{R.Wong's book}.

\subsection{$a<0$} The case $a<0$ is relatively easy, compared with
the case $a>0$. Let us start with the integral representation
(\ref{IR of DCP for x<0}). For fixed $w\in \gamma_2$, the phase
function $\widetilde{f}(t,w)$ in (\ref{IR of DCP for x<0}) has a
saddle point
\begin{equation}\label{saddle points for t a<0 -}
    t_0(w)=\frac{2w-1+\sqrt{1+4b^2(w-1)w}}{2(1+b)w};
\end{equation}
cf.(\ref{saddle points for t}). Note that the only difference
between $f(t,w)$ in (\ref{phase function f for x>0}) and
$\widetilde{f}(t,w)$ in (\ref{phase function f for x<0}) is the
choice of branch cuts in the $w$-plane due to the term $a\ln
w-a\ln(w-1)$ in $f(t,w)$ and the term $a\ln(-w)-a\ln(1-w)$ in
$\widetilde{f}(t,w)$. Similar to (\ref{property of t0(w)}), for any
fixed $w\in \gamma_2$, we have $t_0(w)=1-b+O(b^2)$ as $b\rightarrow
0$; in particular, $t_0(0)=1-b$. By the same reasoning given for
(\ref{mapping t to tau case 1}), we introduce the mapping
$t\rightarrow\tau$ defined by
\begin{equation}\label{mapping t to tau a<0 -}
    b \ln \tau-\tau+A=\widetilde{f}(t,w)-\widetilde{f}(t_0(w),w)
\end{equation}
with
\begin{equation}\label{temp123}
    \tau(t_0(w))=b.
\end{equation}
Coupling (\ref{temp123}) and (\ref{mapping t to tau a<0 -}) yields
$A=b-b\ln b$. From (\ref{mapping t to tau a<0 -}), we have
\begin{equation}\label{dt over dtau a<0-}
        \begin{split}
            \frac{\mathrm{d}t}{\mathrm{d}\tau}& =\frac{b/\tau-1}{\partial \widetilde{f}(t,w)/\partial
            t}=\frac{(\tau-b) t(1-t)[1-(1-t)w]}{\tau(1+b)w(t-t^+_0(w))(t-t^-_0(w))},\quad\quad \tau\neq
            b,\\
            \frac{\mathrm{d}t}{\mathrm{d}\tau}&=\left\{\frac{t^+_0(w)(1-t^+_0(w))[1-(1-t^+_0(w))w]}{b\sqrt{1+4b^2(w-1)w}}\right\}^{1/2}, \hspace{1.6cm}
            \tau=b,
        \end{split}
\end{equation}
where we have used L'H$\hat{o}$spital's rule for $\tau=b$ and
\begin{equation}\label{}
    t_0^{\pm}(w)=\frac{2w-1\pm\sqrt{1+4b^2(w-1)w}}{2(1+b)w};
\end{equation}\\
cf.(\ref{dt over dtau b-0 case 1}). Furthermore, it follows from
(\ref{mapping t to tau a<0 -}) and (\ref{IR of DCP for x<0}) that
\begin{equation}\label{IP after the t mapping 2}
\begin{split}
t_n(x,N+1)=& \frac{(-1)^n}{2\pi
i}\frac{\Gamma(n+N+2)e^nb^{-n}}{\Gamma(n+1)\Gamma(N-n+1)}\\
&\times \int_{0}^{+\infty}\int_{\gamma_2}\frac{1}{w-1}e^{N
\widetilde{f}({t_0(w)},w)}e^{N(b\ln
\tau-\tau)}\frac{\mathrm{d}t}{\mathrm{d}
\tau}\mathrm{d}w\mathrm{d}\tau;
\end{split}
\end{equation}
cf.(\ref{IP after the t mapping b-0}). Solving the equation
$\partial f(t_0(w),w)/\partial w=0$, we obtain the saddle points
\begin{equation}\label{saddle points of w a<0 2}
\begin{split}
    w_{\pm}=&\frac{b\pm\sqrt{b^2-4a+4a^2}}{2b}\\
    =&\frac{1}{2}\pm\frac{1}{2}\sqrt{1-\frac{4a(1-a)}{b^2}};
\end{split}
\end{equation}
cf.(\ref{saddle points of w}). The relevant saddle point on the
integral path $\gamma_2$ is the negative saddle point $w_-$.\\

The following procedure is the same as that given in \cite{pan}.
Recall the Hankel integral for the Gamma function
\begin{equation}\label{}
    \frac{1}{\Gamma(z)}=\frac{1}{2\pi
    i}\int^{(0+)}_{-\infty}e^uu^{-z}\mathrm{d}u,
\end{equation}
where the contour is a loop starting at $-\infty$, encircling the
origin in the counterclockwise direction and returning to $-\infty$.
With $z=-aN+1$ and $u$ replaced by $-u$, we obtain
\begin{equation}\label{Gamma function}
    \frac{1}{\Gamma(-aN+1)}=\frac{N^{aN}}{2\pi
    i}\int^{(0+)}_{\infty}\frac{1}{u}e^{N\widetilde{\psi}(u)}\mathrm{d}u,
\end{equation}
where $\widetilde{\psi}(u)=a\ln(-u)-u$. Make the transformation
$w\rightarrow u$ defined by
\begin{equation}\label{mapping w to u when a<0}
    \begin{split}
    \widetilde{f}(t_0(w),w)&=a\ln(-u)-u+\gamma
    \end{split}
\end{equation}
with
\begin{equation}\label{relation betwen w and u when a<0}
    u(w_-)=a,
\end{equation}
where $\gamma$ is a constant to be determined. We have from
(\ref{mapping w to u when a<0})
\begin{equation}\label{}
        \begin{split}
            \frac{\mathrm{d}w}{\mathrm{d}u}&=\frac{(a-u)w(w-1)\left[(1-2a)+\sqrt{4b^2 w^2-4b^2 w+1}\right]}{2ub^2(w-w_+)(w-w_-)},\quad\quad u\neq
            a
            ,\\
            \frac{\mathrm{d}w}{\mathrm{d} u}&=\left\{\frac{(1-a)(1-2a)}{b^3\sqrt{b^2-4a+4a^2}}\right\}^{1/2},\qquad u=a,
        \end{split}
\end{equation}
where we again have used L'H$\hat{o}$spital's rule for $u=a$.\\

By (\ref{IP after the t mapping 2}) and (\ref{mapping w to u when
a<0}), we have
\begin{equation}\label{}
\begin{split}
t_n(x,N+1)=& \frac{(-1)^n}{2\pi
i}\frac{\Gamma(n+N+2)e^nb^{-n}e^{N\gamma}}{\Gamma(n+1)\Gamma(N-n+1)}\\
&\times
\int_{0}^{+\infty}\int_{\infty}^{\left(0^+\right)}\frac{h(u,\tau)}{u}e^{N
\left(a\ln(-u)-u\right)}e^{N(b\ln
\tau-\tau)}\mathrm{d}u\mathrm{d}\tau,
\end{split}
\end{equation}
where
\begin{equation}\label{}
    h(u,\tau)=\frac{u}{w-1}\frac{\mathrm{d}t}{\mathrm{d}
\tau}\frac{\mathrm{d}w}{\mathrm{d} u}.
\end{equation}
Define recursively
\begin{eqnarray}
  h_l(u,\tau) &=& a_l(\tau)+(u-a)g_l(u,\tau),\\
  h_{l+1}(u,\tau)&=&u\frac{\partial
g_l(u,\tau)}{\partial u},
\end{eqnarray}
where $h_0(u,\tau)=h(u,\tau)$, and expand $a_l(\tau)$ into a
Maclaurin series
\begin{equation}\label{al in a<0}
       a_l(\tau)=\sum_{j=0}^{\infty} a_{l,j}\tau^j.
\end{equation}
 By an integration-by-parts procedure, we obtain
\begin{equation}\label{expansion for a<0 case 1}
  t_n(x,N+1) =\frac{(-1)^{n}\Gamma(n+N+2)e^nb^{-n}e^{N\gamma}}{\Gamma(N-n+1)\Gamma(-aN+1)N^{aN+n+1}}
  \Biggr[\sum_{l=0}^{p-1}\frac{c_l}{N^l}+\varepsilon_p^-\Biggr],
\end{equation}
where
\begin{equation}
  c_l =\frac{N^{n+1}}{\Gamma(n+1)}\int_0^\infty
  a_l(\tau)\tau^ne^{-N\tau}\mathrm{d}\tau\sim
  \sum_{m=0}^\infty a_{l,m}\frac{\Gamma(n+m+1)}{\Gamma(n+1)N^m}
  \end{equation}
 and
\begin{equation}\label{error in a<0}
\varepsilon_p^-= \frac{N^{aN+n+1-p}\Gamma(-aN+1)}{2\Gamma(n+1)\pi
i}\int_{0}^{+\infty}\int_{-\infty}^{\left(0^+\right)}\frac{h_p(u,\tau)}{u}e^{N
\widetilde{\psi}(u)}\tau ^n e^{-N\tau}\mathrm{d}u\mathrm{d}\tau.
\end{equation}
Estimation of the error term given in (\ref{error in a<0}) is
exactly the same as what we have done in \cite{pan}.

\bibliographystyle{plain}

\end{document}